\newtheorem{theorem}{Theorem}[section]
\newtheorem{lemma}{Lemma}[section]
\newtheorem{corollary}{Corollary}[section]
\newtheorem*{corollary*}{Corollary}
\newtheorem{remark}{Remark}[section]
\begin{document}

\thispagestyle{plain} 

\sloppy

\begin{center}
{\bf \Large\bf Spectral density in a Moszynski's class of Jacobi matrices. Spectral phase transition of 2nd type.}
\\ \bigskip
{\large Ianovich E.A.}
\\ \bigskip
   {\it Saint Petersburg, Russia}
\\ \bigskip
   {\it\small E-mail: eduard@yanovich.spb.ru}
\end{center}

\begin{abstract}
In this paper it is considered a spectral density for a class of Jacobi matrices with absolutely continuous spectrum that was examined first by Moszynski. It is shown that the corresponding spectral density is equivalent to the positive continuous function everywhere except the point $x=0$. In the point \(x=0\) the spectral density may be finite as well as infinite depending on the parameter. So in this class we discover an example of 2nd type spectral phase transition.
\end{abstract}

\section{Introduction}

Let us consider Moszynski's class of Jacobi matrices
$$
  \left(\begin{array}{ccccc} 0&b_0&0&0&\ldots\\
                      b_0&0&b_1&0&\ldots\\
                      0&b_1&0&b_2&\ldots\\
                      0&0&b_2&0&\ldots\\
                      \vdots&\vdots&\vdots&\vdots&\ddots
  \end{array}\right),
$$
where $b_0>0$, $b_{2k-1}=b_{2k}=k^\alpha\,,\:k=1,2,3,\ldots\:$; $\alpha\in(1/2;1)$. For given $\alpha$ this matrix describes unbounded self-adjoint operator $A$ with absolutely continuous spectrum covering the whole real axis: $\sigma_{ac}(A)=\mathbb{R}$~\cite{1}.
Standard three-term recurrence relations for this matrix have the form
\begin{equation}
\label{recurrent_relations}
b_{n-1}\,y_{n-1}+b_n\,y_{n+1}=x\,y_n\,,\quad (n=1,2,3,\ldots)
\end{equation}
and define so called generalized eigenvectors $\{y_n\}_{n=0}^\infty$. In particular, 1st type orthogonal polynomials $P_n(x)$ are solutions of~(\ref{recurrent_relations}) with initial conditions $P_0(x)=1$, $P_1(x)=x/b_0$. The relations~(\ref{recurrent_relations}) can be written down in the matrix form
$$
v_{n+1}=B_n\,v_n\,,\quad n=1,2,\ldots\,,
$$
where
$$
v_n=\left(\begin{array}{c}
                     y_{n-1} \\
                     y_n \\
                     \end{array}\right)\,,\quad
B_n=\left(
        \begin{array}{cc}
              0 & 1 \\
              -\frac{b_{n-1}}{b_n} & \frac{x}{b_n} \\
            \end{array}
          \right).
$$
In works~\cite{2,3,4,5,6} it was shown that if there exists the limit
\begin{equation}
\label{approximation}
\lim\limits_{n\to\infty}\left(b_nP^2_n(x)-b_{n-1}P_{n-1}(x)P_{n+1}(x)\right)=\lim\limits_{n\to\infty}\Delta_n(x)=\Delta(x)\,,
\end{equation}
then the corresponding spectral density of the operator $A$ equals to
$$
f(x)=\frac{1}{\pi\Delta(x)}.
$$
In particular, if the weights $b_n$ satisfy the conditions
\begin{enumerate}
\label{conditions}
\item $\displaystyle \lim b_n=+\infty$,

\item $\displaystyle \lim\frac{b_n}{b_{n+1}}=1$,

\item $\displaystyle\left\{\frac{b_{n-1}}{b_n}-\frac{b_{n-2}}{b_{n-1}}\right\}
\in l_1$,

\item $\displaystyle\left\{\frac{1}{b_n}-\frac{1}{b_{n-1}}\right\}\in l_1$\,,
\end{enumerate}
the limit function $\Delta(x)$ exists and is positive everywhere so that $f(x)\in C(-\infty,+\infty)$. In our case the third condition failures. We will show that it results to the possibility the density to have an essential singularity at the point \(x=0\). It confirms the hypothesis made in~\cite{11}. Namely, we will show in Theorem~(\ref{main_theorem}) that
$$
f(x)\sim f_0\,|x|^{\textstyle\frac{3\alpha-2}{1-\alpha}}\,,\:x\to 0\,,\quad f_0\ne0\,.
$$
So that
$$
\lim\limits_{x\to0}f(x)=\left\{\begin{array}{l} +\infty\,,\qquad \frac{1}{2}<\alpha<\frac{2}{3}\\
											    f_0\ne0\,,\quad \alpha=\frac{2}{3}\\
											    \:0\,, \quad\qquad \frac{2}{3}<\alpha<1
											    \end{array}\right.\,.
$$

This effect may be called spectral phase transition of 2nd type by analogy in physics. Spectral phase transition of 1st type was considered first in~\cite{12}. It consists of a sudden change in the type of spectrum. Here there is no a sudden change of spectrum type but the spectral density suddenly change their properties. The same situation takes place in physics with 2nd type phase transition~\cite{13}. It explains why we choose such name for this effect.

\section{Asymptotics of generalized eigenvectors and spectral density at \(x\ne 0\)}

To obtain the asymptotics of orthogonal polynomials we will use the following discrete analogue of Levinson-type theorem~\cite{7}:
\begin{theorem}
\label{Levinson}
Let
$$
u_{n+1}=(A_n+V_n)\,u_n\,,\quad n=1,2,\ldots
$$
be a system of recurrence relations, where $A_n,V_n$ are $2\times2$ matrices and $u_n=(u_n^{(1)},u_n^{(2)})^{\top}$. Suppose the matrices $A_n$ are diagonalizable and not degenerate for all $n$ sufficiently large: $A_n=S_n\Lambda_n S_n^{-1}$, $\Lambda_n=diag(\lambda_1(n);\lambda_2(n))$. Let the eigenvalues of $A_n$ satisfy for all sufficiently large $n$ the condition: $|\lambda_1(n)|\le|\lambda_2(n)|$ $($or $|\lambda_1(n)|\ge|\lambda_2(n)|$$)$. Let the norms $\|S_n\|$ be bounded and
$$
S_{n+1}^{-1}S_n=E+R_n\,,\quad \sum_{n=p}^\infty\frac{\|R_n\Lambda_n\|}{|\lambda_i(n)|}<+\infty\,,\quad
\sum_{n=p}^\infty\frac{\|S_{n+1}^{-1}V_nS_n\|}{|\lambda_i(n)|}<+\infty\,,\,i=1,2\,,
$$
where $p$ is large enough, $E$ is identity matrix. Then there exist solutions of recurrence relations satisfying the following asymptotic formulas
$$
u^{(i)}_n=\left(\prod_{j=p}^{n-1}\lambda_i(j)\right)(e_n^{(i)}+\circ(1))\,,\quad n\to\infty\,,\,i=1,2\,,
$$
where $e_n^{(i)}$ are eigenvectors of $A_n$ $($$A_ne_n^{(i)}=\lambda_i(n)e_n^{(i)}$$)$.
\end{theorem}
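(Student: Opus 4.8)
The plan is to reduce the system to a diagonal-plus-summable form and then build the two distinguished solutions by a contraction argument in a weighted sequence space. First I would perform the diagonalizing substitution $u_n=S_nw_n$. Using $A_n=S_n\Lambda_nS_n^{-1}$ together with $S_{n+1}^{-1}S_n=E+R_n$, the recurrence becomes
$$
w_{n+1}=S_{n+1}^{-1}(A_n+V_n)S_nw_n=(E+R_n)\Lambda_nw_n+S_{n+1}^{-1}V_nS_nw_n=(\Lambda_n+W_n)w_n,
$$
with perturbation $W_n=R_n\Lambda_n+S_{n+1}^{-1}V_nS_n$. The two summability hypotheses are designed precisely so that $\sum_{n\ge p}\|W_n\|/|\lambda_i(n)|<+\infty$ for $i=1,2$; this is the property that will drive everything.

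Next I would normalize. Fix $i\in\{1,2\}$, set $\pi_i(n)=\prod_{j=p}^{n-1}\lambda_i(j)$ and write $w_n=\pi_i(n)z_n$. Dividing by $\lambda_i(n)$ turns the equation into
$$
z_{n+1}=D_n^{(i)}z_n+\frac{1}{\lambda_i(n)}W_nz_n,\qquad
D_n^{(i)}=\mathrm{diag}\!\left(\frac{\lambda_1(n)}{\lambda_i(n)},\frac{\lambda_2(n)}{\lambda_i(n)}\right),
$$
whose $i$-th diagonal entry is identically $1$; the goal is to produce a solution with $z_n\to e_i$, the $i$-th coordinate vector, since then $u_n^{(i)}=S_nw_n^{(i)}=\pi_i(n)(S_ne_i+o(1))=\pi_i(n)(e_n^{(i)}+o(1))$, where the step $S_n\cdot o(1)=o(1)$ uses the boundedness of $\|S_n\|$ and $S_ne_i=e_n^{(i)}$ because the columns of $S_n$ are the eigenvectors of $A_n$.

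I would then recast the normalized equation as a fixed-point equation $z=T^{(i)}z$, where
$$
(T^{(i)}z)_n=e_i+\sum_{m}G^{(i)}(n,m)\,\frac{1}{\lambda_i(m)}\,W_mz_m,
$$
and $G^{(i)}(n,m)$ is the discrete Green kernel built from the diagonal transition matrix of $D^{(i)}$: the component in the direction with $|\lambda_k/\lambda_i|\le1$ is summed forward from a large index $N$, while the component with $|\lambda_k/\lambda_i|>1$ is summed backward from $+\infty$. The ordering hypothesis $|\lambda_1(n)|\le|\lambda_2(n)|$ (or the reverse) guarantees that for each fixed $i$ the kernel $G^{(i)}(n,m)$ is \emph{uniformly bounded}, since the relevant products $\prod|\lambda_k/\lambda_i|$ then have modulus at most $1$ in the chosen summation direction. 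On the ball $\{\sup_{n\ge N}\|z_n-e_i\|\le\tfrac12\}$ of $\ell^\infty$, the operator $T^{(i)}$ maps into itself and is a contraction once $N$ is chosen so that $\sum_{m\ge N}\|W_m\|/|\lambda_i(m)|$ is small; its unique fixed point is the sought solution, and the tail summability makes the correction term tend to $0$ as $n\to\infty$, whence $z_n\to e_i$.

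I expect the delicate step to be the construction of $G^{(i)}$ and the verification that it is bounded. The obstruction is that the off-diagonal products $\prod_{j}\lambda_l(j)/\lambda_k(j)$ generally grow, so a naive forward summation diverges; the split of the summation direction according to the size of $|\lambda_k/\lambda_i|$, legitimized exactly by the dichotomy $|\lambda_1(n)|\le|\lambda_2(n)|$, is what keeps the kernel controlled and the scheme convergent. Everything else---the contraction estimate and the passage from $z_n\to e_i$ to $u_n^{(i)}\sim\pi_i(n)e_n^{(i)}$---is a routine consequence of the summability of $W_n/\lambda_i$ together with the bound on $\|S_n\|$.
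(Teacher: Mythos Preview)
The paper does not actually prove this theorem; it is stated as a tool with a citation to reference~\cite{7}, so there is no in-paper argument against which to compare your sketch. Your plan is the standard Benzaid--Lutz style reduction (diagonalise via $u_n=S_nw_n$, normalise by the product $\pi_i(n)$, and solve a fixed-point equation with a split discrete Green kernel), and this is essentially how such discrete Levinson theorems are proved in the literature your paper draws on, in particular~\cite{9}.

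One point in your kernel split deserves care. You send directions with $|\lambda_k/\lambda_i|\le1$ to a forward sum from $N$; but the target component $k=i$ has ratio exactly $1$, and a forward sum there converges to the full (generally nonzero) series rather than to $0$, so the resulting fixed point would only satisfy $z_n\to e_i+c$ with $c\ne0$, spoiling the asymptotic $u_n^{(i)}=\pi_i(n)(e_n^{(i)}+o(1))$. The $i$-th component must be summed backward from $\infty$ so that the correction is a genuine tail. More generally, in the application made in the paper the two eigenvalues are complex conjugates of equal modulus, so both ratios equal $1$ and \emph{both} components should be summed backward. With this adjustment your contraction argument and the passage from $z_n\to e_i$ to $u_n^{(i)}=\pi_i(n)(e_n^{(i)}+o(1))$ via the boundedness of $\|S_n\|$ go through exactly as you describe.
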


\begin{remark}
\label{remark1}
If the matrices $A_n$ and $V_n$ depend on the parameter $x\in[a,b\,]$, then the uniform in $x$ fulfilling of all conditions of the theorem means that the term $\circ(1)$ in asymptotic formulas also uniformly tends to zero as $n\to\infty$.
\end{remark}

Let $V_k=0$, $u_k=v_{2k}$ and
$$
A_k=B_{2k+1}B_{2k}=\left(\begin{array}{cc} -\frac{b_{2k-1}}{b_{2k}} & \frac{x}{b_{2k}}\\
                                   -\frac{b_{2k-1}}{b_{2k}}\frac{x}{b_{2k+1}} & \frac{x^2}{b_{2k+1}b_{2k}}-\frac{b_{2k}}{b_{2k+1}}
                \end{array}\right)=
                \left(\begin{array}{cc} -1 & \frac{x}{k^\alpha}\\
                                   -\frac{x}{(k+1)^\alpha} & \frac{x^2}{(k+1)^\alpha k^\alpha}-\frac{k^\alpha}{(k+1)^\alpha}
                \end{array}\right).
$$
Then
$$
u_{k+1}=A_k\,u_{k}.
$$
It is easy to check that if $x\in\mathbb{R}\setminus\{0\}$, then $\mbox{discr}\,A_k<0$ for $k>N$. Hence for all $k$ sufficiently large the matrices $A_k$ have two complex conjugate eigenvalues and it is easy to check that
$$
A_k=S_k\Lambda_kS_k^{-1}\,,\quad (k>N\,,\,x\ne0)\,,
$$
where
$$
\Lambda_k=\left(
            \begin{array}{cc}
              \lambda_k^+ & 0 \\
              0 & \lambda_k^- \\
            \end{array}
          \right)\,,\quad
          \lambda_k^{\pm}=\frac{1}{2}\left(\omega_k\pm i\sqrt{4\frac{k^\alpha}{(k+1)^\alpha}-\omega_k^2}\,\right)\,,\:
\omega_k=\frac{x^2}{((k+1)k)^\alpha}-\frac{k^\alpha}{(k+1)^\alpha}-1\,,
$$
$$
S_k=\left(
            \begin{array}{cc}
              1 & 1 \\
              \frac{k^\alpha(\lambda_k^++1)}{x} & \frac{k^\alpha(\lambda_k^-+1)}{x} \\
            \end{array}
          \right)\,,
$$
so that
$$
(\lambda_k^{\pm}+1)\sim\frac{\alpha}{2k}+\frac{x^2}{2k^{2\alpha}}\pm i\frac{|x|}{k^\alpha}\,,\quad |\lambda_k^{\pm}|=\sqrt{\frac{k^\alpha}{(k+1)^\alpha}}\to 1\,,\,k\to\infty,
$$
$$
S_k\to\left(
            \begin{array}{cc}
              1 & 1 \\
              i\frac{|x|}{x} & -i\frac{|x|}{x} \\
            \end{array}
          \right)\,,\,k\to\infty.
$$
Therefore the norms $\|S_k\|$ are bounded. Next, we have
$$
S_{k+1}^{-1}S_k=\frac{1}{k^\alpha(\lambda_k^--\lambda_k^+)}\cdot
$$
$$
        \cdot\left(
            \begin{array}{cc}
              (k+1)^\alpha(\lambda_{k+1}^-+1)-k^\alpha(\lambda_{k}^++1) & (k+1)^\alpha(\lambda_{k+1}^-+1)-k^\alpha(\lambda_{k}^-+1) \\
              k^\alpha(\lambda_{k}^++1)-(k+1)^\alpha(\lambda_{k+1}^++1) &
              k^\alpha(\lambda_{k}^-+1)-(k+1)^\alpha(\lambda_{k+1}^++1) \\
            \end{array}
          \right)=E+R_k\,,
$$
where
$$
R_k\sim\frac{\frac{\alpha}{2}(\frac{1}{(k+1)^{1-\alpha}}-\frac{1}{k^{1-\alpha}})+
              \frac{x^2}{2}(\frac{1}{(k+1)^{\alpha}}-\frac{1}{k^{\alpha}})}{-2i|x|}
        \left(
            \begin{array}{cc}
              1 & 1\\
              -1 & -1
            \end{array}
          \right).
$$
It follows from this formula that $\{\|R_k\|\}\in l_1$.

Thus all conditions of the Theorem~(\ref{Levinson}) are fulfilled, and there exist solutions $u^{\pm}_k=v^{\pm}_{2k}$ such that
$$
u^{\pm}_k=v^{\pm}_{2k}=\left(\prod_{j=p}^{k-1}\lambda^{\pm}_j\right)(e_k^{\pm}+\circ(1))\,,\quad k\to\infty\,,
$$
where
$$
e_k^{\pm}=\left(
                  \begin{array}{c}
                    1 \\
                    \frac{k^\alpha(\lambda_k^{\pm}+1)}{x} \\
                  \end{array}
                \right)\to\left(
                            \begin{array}{c}
                              1 \\
                              \pm i\frac{|x|}{x} \\
                            \end{array}
                          \right)=e^{\pm}\,,\quad k\to\infty.
$$
Let $\lambda_k^{\pm}=|\lambda_k^{\pm}|\,e^{i\phi^{\pm}_k}=\frac{k^{\alpha/2}}{(k+1)^{\alpha/2}}\,e^{i\phi^{\pm}_k}$, where
$\phi^{\pm}_k\sim\pi\mp\frac{|x|}{k^\alpha}$ as $k\to\infty$. Then
$$
v^{\pm}_{2k}=\frac{p^{\alpha/2}}{k^{\alpha/2}}\,\exp\left(i\,\sum\limits_{j=p}^{k-1}\phi^{\pm}_j\right)(e^{\pm}+\circ(1))\,,\quad k\to\infty.
$$
Using Euler's summation formula, one has
$$
\sum\limits_{j=p}^{k-1}\phi^{\pm}_j=\pi k\mp\frac{|x|\,k^{1-\alpha}}{1-\alpha}\mp c+\circ(1)\,,\quad k\to\infty,
$$
where $c$ is some constant. Hence
$$
v^{\pm}_{2k}=\frac{p^{\alpha/2}}{k^{\alpha/2}}\,\exp\left[i\left(\pi k\mp\frac{|x|\,k^{1-\alpha}}{1-\alpha}\mp c\right)\right]
(e^{\pm}+\circ(1))\,,\quad k\to\infty.
$$
It follows that
$$
y_{2k-1}^{\pm}=\frac{p^{\alpha/2}}{k^{\alpha/2}}\,\exp\left[i\left(\pi k\mp\frac{|x|\,k^{1-\alpha}}{1-\alpha}\mp c\right)\right](1+\circ(1))\,,\quad k\to\infty,
$$
$$
y_{2k}^{\pm}=\pm\,\mbox{sign}(x)\,i\,\frac{p^{\alpha/2}}{k^{\alpha/2}}\,\exp\left[i\left(\pi k\mp
\frac{|x|\,k^{1-\alpha}}{1-\alpha}\mp c\right)\right]
(1+\circ(1))\,,\quad k\to\infty.
$$

It is evident that these asymptotics belong to the two linearly independent solutions of~(\ref{recurrent_relations}). Hence 1st type polynomials $P_n(x)$ are their linear combination with constant coefficients and have the following asymptotic behavior
\begin{equation}
\label{asymptotics_P_n(x)}
\begin{array}{c}
\displaystyle P_{2k-1}(x)=(-1)^k\frac{C}{k^{\alpha/2}}\,\cos\left(\frac{|x|\,k^{1-\alpha}}{1-\alpha}+\beta\right)+\circ\left(\frac{1}{k^{\alpha/2}}\right)\,,\quad k\to\infty, \\
\displaystyle
P_{2k}(x)=(-1)^k\,\mbox{sign}(x)\,\frac{C}{k^{\alpha/2}}\,\sin\left(\frac{|x|\,k^{1-\alpha}}{1-\alpha}+\beta\right)+\circ\left(\frac{1}{k^{\alpha/2}}\right)\,,
\quad k\to\infty,
\end{array}
\end{equation}
where $C>0$ and $\beta\in\mathbb{R}$ are some constants (depending on $x$ generally). This formulas were obtained under the condition $x\ne 0$. From remark~(\ref{remark1}) it follows that the term $\circ(1)$ tends to zero uniformly on any set of the form $[-A;-B]\cup[B;A]$ ($0<B<A$). If $x=0$ then the matrices $A_k$ become diagonal and one can easily obtain that
\begin{equation}
\label{asymptotics_P_n(0)}
\begin{array}{c}
\displaystyle P_{2k-1}(0)=0\,, \\
\displaystyle P_{2k}(0)=(-1)^k\,\frac{b_0}{k^\alpha}.
\end{array}
\end{equation}
It follows that the asymptotics at $x=0$ is changed. It reflects the fact of appearance of eigenvalue at $x=0$ when $\alpha>1/2$~\cite{1,5}.

Substituting~(\ref{asymptotics_P_n(x)}) and~(\ref{asymptotics_P_n(0)}) in~(\ref{approximation}), one obtains
\begin{equation}
\label{limit_Delta_n(x)}
\Delta(x)=\lim\limits_{n\to\infty}\left(b_nP^2_n(x)-b_{n-1}P_{n-1}(x)P_{n+1}(x)\right)=C^2\equiv C^2(x)>0\,,\:x\ne 0\,,
\end{equation}
$$
\Delta(0)=0\,,
$$
so that
\begin{equation}
\label{formula_for_spectral_density}
f(x)=\frac{1}{\pi C^2(x)}\,,\:x\ne 0.
\end{equation}

Due to uniform vanishing of $\circ(1)$ in asymptotic formulas~(\ref{asymptotics_P_n(x)}), the limit~(\ref{limit_Delta_n(x)})
is also uniform on any set of the form $[-A;-B]\cup[B;A]$ ($0<B<A$). Hence the function $\Delta(x)=C^2(x)$ is continuous everywhere except maybe the point $x=0$ ! Thus we obtain

\begin{theorem}
Spectral density of the considered class of operators is equivalent to the positive continuous function everywhere except maybe the point $x=0$.
\end{theorem}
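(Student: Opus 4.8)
The plan is to read off the theorem from the two ingredients already prepared above: the explicit formula $f(x)=1/(\pi C^2(x))$ valid for $x\neq0$ from~(\ref{formula_for_spectral_density}), and the local uniform control of the approximants $\Delta_n(x)$ supplied by the uniform version of Theorem~(\ref{Levinson}). The whole argument is essentially a packaging of these facts.

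First I would note that each approximant $\Delta_n(x)=b_nP_n^2(x)-b_{n-1}P_{n-1}(x)P_{n+1}(x)$ is a polynomial in $x$ and hence continuous on $\mathbb{R}$. By Remark~(\ref{remark1}), the hypotheses of Theorem~(\ref{Levinson}) hold uniformly in $x$ on every set $[-A;-B]\cup[B;A]$ with $0<B<A$, so the remainders $\circ(1)$ in the asymptotics~(\ref{asymptotics_P_n(x)}) tend to zero uniformly there. Substituting into $\Delta_n$, the convergence $\Delta_n(x)\to\Delta(x)=C^2(x)$ is therefore uniform on each such set. A uniform limit of continuous functions being continuous, $C^2$ is continuous on every $[-A;-B]\cup[B;A]$; since each $x_0\neq0$ lies inside one of these sets, $C^2$ is continuous on all of $\mathbb{R}\setminus\{0\}$.

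Next I would invoke the positivity already established in~(\ref{limit_Delta_n(x)}), namely $C^2(x)>0$ for every $x\neq0$. Together with continuity this makes the reciprocal $f(x)=1/(\pi C^2(x))$ a well-defined, strictly positive, continuous function on $\mathbb{R}\setminus\{0\}$. Finally, because the spectrum of $A$ is purely absolutely continuous, its density is an element of $L^1_{\mathrm{loc}}$ determined only up to Lebesgue-null sets; the single excepted point $x=0$ is such a set, so the density coincides almost everywhere with the positive continuous function $f$ just constructed, which is precisely the asserted equivalence.

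I expect the only point requiring care to be the upgrade from pointwise to locally uniform convergence of $\Delta_n$ near, but excluding, the origin: the remainders in~(\ref{asymptotics_P_n(x)}) are not controlled uniformly up to $x=0$, only on sets bounded away from it. This is exactly what the uniform form of the Levinson theorem in Remark~(\ref{remark1}) provides, and it is also the reason the conclusion must exclude $x=0$; everything else is routine verification.
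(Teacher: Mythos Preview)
Your proposal is correct and follows essentially the same route as the paper: uniform (in $x$, away from $0$) vanishing of the $o(1)$ remainders in~(\ref{asymptotics_P_n(x)}) via Remark~(\ref{remark1}) forces the convergence $\Delta_n\to C^2$ to be uniform on each $[-A;-B]\cup[B;A]$, whence $C^2$ is continuous and positive on $\mathbb{R}\setminus\{0\}$, and $f=1/(\pi C^2)$ inherits both properties. One small inaccuracy: the spectrum of $A$ is \emph{not} purely absolutely continuous---as noted just after~(\ref{asymptotics_P_n(0)}), there is an eigenvalue at $x=0$ for $\alpha>1/2$---but this does not affect your argument, since the spectral density refers to the absolutely continuous part and the excluded point $\{0\}$ is null in any case.
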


\section{Investigation of spectral density at the point \(x=0\)}

Let's present the matrix \(A_k\) in the form
$$
A_k=B_{2k+1}B_{2k}=
                \left(\begin{array}{cc} -1 & \frac{x}{k^\alpha}\\
                                   -\frac{x}{(k+1)^\alpha} & \frac{x^2}{(k+1)^\alpha k^\alpha}-\frac{k^\alpha}{(k+1)^\alpha}
                \end{array}\right)=
$$
$$
=\left(\begin{array}{cc} -1 & 0\\
                                   0 & -\frac{k^\alpha}{(k+1)^\alpha}
                \end{array}\right)+\left(\begin{array}{cc} 0 & \frac{x}{k^\alpha}\\
                                   -\frac{x}{(k+1)^\alpha} & 0
                \end{array}\right)+\left(\begin{array}{cc} 0 & 0\\
                                   0 & \frac{x^2}{(k+1)^\alpha k^\alpha}
                \end{array}\right)=
$$
$$
=-\frac{k^{\alpha/2}}{(k+1)^{\alpha/2}}\left(\begin{array}{cc} \frac{(k+1)^{\alpha/2}}{k^{\alpha/2}} & 0\\
                                   0 & \frac{k^{\alpha/2}}{(k+1)^{\alpha/2}}
                \end{array}\right)+\frac{x}{k^\alpha}\left(\begin{array}{cc} 0 & 1\\
                                   -1 & 0
                \end{array}\right)+
                \left(\frac{x}{k^\alpha}-\frac{x}{(k+1)^\alpha}\right)		 
               	\left(\begin{array}{cc} 0 & 0\\
                                   1 & 0
                \end{array}\right)+
$$
$$
+\frac{x^2}{(k+1)^\alpha k^\alpha}	\left(\begin{array}{cc} 0 & 0\\
                                   0 & 1
                \end{array}\right)=-\frac{k^{\alpha/2}}{(k+1)^{\alpha/2}}\left(E+\frac{\alpha}{2k}
                \left(\begin{array}{cc} 1 & 0\\
                                   0 & -1
                \end{array}\right)+O\left(\frac{1}{k^2}\right)\right)+\frac{x}{k^\alpha}
                \left(\begin{array}{cc} 0 & 1\\
                                       -1 & 0
                \end{array}\right)+
$$
$$
+\left(\frac{x}{k^\alpha}-\frac{x}{(k+1)^\alpha}\right)		 
               	\left(\begin{array}{cc} 0 & 0\\
                                   1 & 0
                \end{array}\right)+\frac{x^2}{(k+1)^\alpha k^\alpha}	
                \left(\begin{array}{cc} 0 & 0\\
	                                   0 & 1
                \end{array}\right)\,.
$$
Putting out of brackets the factor \(-\dfrac{k^{\alpha/2}}{(k+1)^{\alpha/2}}\), we find
$$
A_k=-\frac{k^{\alpha/2}}{(k+1)^{\alpha/2}}\left(E-\frac{x}{k^\alpha}
                \left(\begin{array}{cc} 0 & 1\\
                                       -1 & 0
                \end{array}\right)-\frac{x^2}{(k+1)^\alpha k^\alpha}	
                \left(\begin{array}{cc} 0 & 0\\
	                                   0 & 1
                \end{array}\right)+\frac{\alpha}{2k}
                \left(\begin{array}{cc} 1 & 0\\
                                   0 & -1
                \end{array}\right)+R_k\right)\,,
$$
where \(\{\|R_k\|\}\in l_1\) uniformly on any bounded interval of \(x\). Suppose for the simplicity that \(\alpha\in(1/2,1)\). Then we can write
$$
A_k=-\frac{k^{\alpha/2}}{(k+1)^{\alpha/2}}\left(E-\frac{x}{k^\alpha}
                \left(\begin{array}{cc} 0 & 1\\
                                       -1 & 0
                \end{array}\right)+\frac{\alpha}{2k}
                \left(\begin{array}{cc} 1 & 0\\
                                   0 & -1
                \end{array}\right)+R_k\right)\,,
$$
where \(R_k\) have the same properties as before (we will save the same notation). Introducing notations
\begin{equation}
\label{notations_P_S}
S=\left(\begin{array}{cc} 0 & 1\\
                                   -1 & 0
                \end{array}\right)\,,\quad 
P=\left(\begin{array}{cc} 1 & 0\\
                                   0 & -1
                \end{array}\right)\,,\quad S^2=-E\,,\quad P^2=E\,,\quad 
PS=-SP\,,
\end{equation}
and remembering that \(\alpha\in(1/2,1)\), we can rewrite previous expression as
$$
A_k=-\frac{k^{\alpha/2}}{(k+1)^{\alpha/2}}\left(E-\frac{x}{k^\alpha}\,S+\frac{\alpha}{2k}\,P+R_k\right)=-\frac{k^{\alpha/2}}{(k+1)^{\alpha/2}}\left(\exp\left(-\frac{x}{k^\alpha}\,S\right)+\frac{\alpha}{2k}\,P+R'_k\right)\,,
$$
where \(R'_k\) has the same properties as \(R_k\). We have next
$$
A_k=-\frac{k^{\alpha/2}}{(k+1)^{\alpha/2}}\exp\left(-\frac{x}{k^\alpha}\,S\right)\left(E+\frac{\alpha}{2k}\exp\left(\frac{x}{k^\alpha}\,S\right)P+R''_k\right)\,,
$$
or
\begin{equation}
\label{formula_1}
A_k=-\frac{k^{\alpha/2}}{(k+1)^{\alpha/2}}\,U_k\left(E+T_k+R''_k\right)\,,
\end{equation}
where
$$
U_k=\exp\left(-\frac{x}{k^\alpha}\,S\right)\,,\quad T_k=\frac{\alpha}{2k}\exp\left(\frac{x}{k^\alpha}\,S\right)P\,,\quad R''_k=\exp\left(\dfrac{x}{k^\alpha}\,S\right)R'_k\,.
$$

From~(\ref{notations_P_S}) it follows that for \(t\in\mathbb{R}\) the matrix \(\exp\left(tS\right)\)
is orthogonal: \(U_kU_k^T=U_k^TU_k=E\), so that \(\|R''_k\|=\|R'_k\|\) (analogous idea used in~\cite{8}). Besides,
\begin{equation}
\label{matrix_representation_exp(S)}
e^{t\,S}=\left(\begin{array}{cc} \cos t & \sin t\\
                                   -\sin t & \cos t
                \end{array}\right)\,.
\end{equation}
Using~(\ref{matrix_representation_exp(S)}), we obtain 
\begin{equation}
\label{asymptotics_F_k}
F_k\equiv U_kU_{k-1}\ldots U_1=\exp\left(-x\sum\limits_{n=1}^k\frac{1}{n^\alpha}\,S\right)=
\left(\begin{array}{cc} \cos\phi_k & \sin\phi_k\\
                                   -\sin\phi_k & \cos\phi_k
                \end{array}\right)(E+o(1))\,,
\end{equation}
where \(\phi_k=-\dfrac{x\,k^{1-\alpha}}{1-\alpha}+xc\) (\(c\) is constant). So, the product \(U_kU_{k-1}\ldots U_1\) insures oscillating behaviour of asymptotics~(\ref{asymptotics_P_n(x)}).
Taking it into account, we have from~(\ref{formula_1}) (omitting strokes at \(R''_k\))
$$
A_kA_{k-1}\ldots A_1=\frac{(-1)^k}{(k+1)^{\alpha/2}}\,U_k(E+T_k+R_k)U_{k-1}(E+T_{k-1}+R_{k-1})\ldots
U_1T_1(E+T_1+R_1)=
$$
$$
=\frac{(-1)^k}{(k+1)^{\alpha/2}}\,U_kU_{k-1}(E+U_{k-1}^TT_kU_{k-1}+U_{k-1}^TR_kU_{k-1})(E+T_{k-1}+R_{k-1})\ldots U_1T_1(E+R_1)\,.
$$
Here we used again orthogonality of \(U_k\). Due to this orthogonality we have
\(\|U_{k-1}^TR_kU_{k-1}\|=\|R_k\|\) (\(U_k^*=U_k^T\), since the matrix \(U_k\) is real).
Continuing this process, we obtain 
$$
A_kA_{k-1}\ldots A_1=\frac{(-1)^k}{(k+1)^{\alpha/2}}\,F_k(E+T'_k+R'_k)(E+T'_{k-1}+R'_{k-1})\ldots (E+T'_1+R'_1)\,,
$$
where
$$
T'_k=F_{k-1}^TT_kF_{k-1}\,,\quad R'_k=F_{k-1}^TR_kF_{k-1}\,,\:\|R'_k\|=\|R_k\|\,,\:k\ge2\,,\quad
T'_1=T_1\,,\:R'_1=R_1\,. 
$$
Using~(\ref{matrix_representation_exp(S)}), we have
$$
T'_k=F_{k-1}^TT_kF_{k-1}=\frac{\alpha}{2k}\,F_{k-1}^T\,U_k^TPF_{k-1}=\beta_k\,F_k^TPF_{k-1}=
$$
$$
=\beta_k\left(\begin{array}{cc} \cos t_k & -\sin t_k\\
                                   \sin t_k & \cos t_k
                \end{array}\right)        
                \left(\begin{array}{cc} 1 & 0\\
                                   0 & -1
                \end{array}\right)        
      \left(\begin{array}{cc} \cos t_{k-1} & \sin t_{k-1}\\
                                   -\sin t_{k-1} & \cos t_{k-1}
                \end{array}\right)=   
$$
$$                
=\beta_k\left(\begin{array}{cc} 
		\cos(t_k+t_{k-1}) & \sin(t_k+t_{k-1})\\
       \sin(t_k+t_{k-1}) & -\cos(t_k+t_{k-1})
                \end{array}\right)\,,
$$
where
$$
t_k=-x\sum\limits_{n=1}^k\frac{1}{n^\alpha}\,,\quad \beta_k=\frac{\alpha}{2k}\,.
$$
So we obtain
\begin{equation}
\label{formula_prod_A_i}
A_kA_{k-1}\ldots A_1=\frac{(-1)^k}{(k+1)^{\alpha/2}}\,F_kG_k\,,
\end{equation}
where
$$
G_k=(E+T'_k+R'_k)(E+T'_{k-1}+R'_{k-1})\ldots (E+T'_1+R'_1)\equiv\prod_{i=1}^k(E+T'_i+R'_i)\,,
$$
\(\{\|R'_k\|\}\in l_1\) uniformly on any bounded interval of \(x\) and \(F_k\) is defined by~(\ref{asymptotics_F_k}). Since
$$
t_k=-x\sum\limits_{n=1}^k\frac{1}{n^\alpha}=-\dfrac{x\,k^{1-\alpha}}{1-\alpha}+xc+O\left(\frac{x}{k^\alpha}\right)=\phi_k+O\left(\frac{x}{k^\alpha}\right)\,,\quad k\to\infty\,,\quad (c=const)\,,
$$
$$
\phi_k=-\dfrac{x\,k^{1-\alpha}}{1-\alpha}+xc\,,
$$
we can write
$$
T'_k=\frac{\alpha}{2k}\left(\begin{array}{cc} 
		\cos(t_k+t_{k-1}) & \sin(t_k+t_{k-1})\\
       \sin(t_k+t_{k-1}) & -\cos(t_k+t_{k-1})
                \end{array}\right)=
$$
$$                
=\frac{\alpha}{2k}\left(\begin{array}{cc} 
		\cos2\phi_k & \sin2\phi_k\\
       \sin2\phi_k & -\cos2\phi_k
                \end{array}\right)+O\left(\frac{x}{k^{1+\alpha}}\right)\,,\quad k\to\infty\,,
$$
and
$$
G_k=\prod_{n=1}^k(E+H_n+D_n)\,,
$$
where
$$
H_n=\frac{\alpha}{2n}\left(\begin{array}{cc} 
		\cos2\phi_n & \sin2\phi_n\\
       \sin2\phi_n & -\cos2\phi_n
                \end{array}\right)\,,
$$
and \(\{\|D_n\|\}\in l_1\) uniformly on any bounded interval of \(x\).

Let's prove that the matrix product \(G_k\) converges if \(x\ne 0\). The convergence is not absolute, since \(\sum\limits_n\|H_n\|=\infty\). To establish conditionally convergence of \(G_k\), we will use the following theorem (see~\cite{9}, Theorem 3.2) :

\begin{theorem}
\label{infinite_product_convergence}
Let \(\{V_n\}_1^\infty\) be a sequence of square matrices with \(\|V_n\|<1\).
Suppose the matrix series \(\sum\limits_{n=1}^\infty V_n\) conditionally converges, and let 
\(Q_n=\sum\limits_{k=n}^\infty V_k\). If
$$
\sum\limits_{n=1}^\infty\|V_nQ_n\|<\infty\,,
$$
then the matrix product \(\prod\limits_{k=1}^n(E+V_k)\) conditionally converges to an invertible matrix.
\end{theorem}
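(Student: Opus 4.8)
The plan is to regard this as the matrix analogue of Dirichlet's test for conditional convergence of infinite products, realised through a summation by parts (equivalently, a near-identity ``Harris--Lutz'' change of variables, in the spirit of the orthogonality device of~\cite{8}). I fix the ordering $P_n=(E+V_1)(E+V_2)\cdots(E+V_n)$, so that $P_n-P_{n-1}=P_{n-1}V_n$, and convergence of the product is equivalent to convergence of the telescoping series $\sum_n P_{n-1}V_n$ together with uniform boundedness of the partial products, $\sup_n\|P_n\|<\infty$. The naive majorant $\|P_n\|\le\prod_{k\le n}(1+\|V_k\|)$ is worthless here, since $\sum_k\|V_k\|=\infty$; overcoming this is exactly where the conditional convergence must be used.

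First I would insert $V_n=Q_n-Q_{n+1}$ and sum by parts. This turns $\sum_{n=1}^N P_{n-1}V_n$ into the boundary terms $Q_1-P_{N-1}Q_{N+1}$ plus the series $\sum_{n}P_{n-2}\,V_{n-1}Q_n$. The boundary term $P_{N-1}Q_{N+1}$ vanishes in the limit once $\|P_n\|$ is bounded, because $Q_n\to0$ by conditional convergence of $\sum V_n$. The whole point of the hypothesis $\sum_n\|V_nQ_n\|<\infty$ is that, after the shift $V_{n-1}Q_n=V_{n-1}Q_{n-1}-V_{n-1}^2$, the leading piece $\sum_n P_{n-2}V_{n-1}Q_{n-1}$ is absolutely convergent as soon as the products are bounded, the factor $V_{n-1}Q_{n-1}$ being precisely the one the hypothesis controls.

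The main obstacle is the circular coupling between (a) the uniform bound $\sup_n\|P_n\|<\infty$ and (b) the summability of the quadratic remainders $V_{n-1}^2$: the bound on $P_n$ is needed to sum the series, yet the series is what is used to bound $P_n$. I would break the circle by a bootstrap/discrete Gronwall argument run not from $n=1$ but from a large index $p$: applying the summation-by-parts identity to the tail product $(E+V_{p+1})\cdots(E+V_N)$ and choosing $p$ so large that $\|Q_{p+1}\|$, $\sup_{n>p}\|Q_n\|$ and $\sum_{n>p}\|V_{n-1}Q_n\|$ are all small, the identity yields $\max_{p<n\le N}\|\,\cdot\,\|\le c+\tfrac12\max_{p<n\le N}\|\,\cdot\,\|$, hence a uniform bound on the tail product and therefore on $\|P_n\|$. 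Feeding this bound back makes the transformed series genuinely (absolutely) convergent, and with it $P_n$. The delicate technical point hidden in (b) is the summability of $\sum_n\|V_n\|^2$; in the scalar model this follows from $\sum_n|V_nQ_n|<\infty$ together with $Q_n\to0$ by a positivity argument (writing $V_n^2=V_nQ_n-V_nQ_{n+1}$ and summing the second term by parts), and in the matrix setting it is this summability --- equivalently $\{\|V_n\|\}\in\ell_2$, which is plainly satisfied in our application where $\|V_n\|=O(1/n)$ --- that must be secured.

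Finally, for invertibility of the limit I would apply the part already proved to the product of inverses. Since $\|V_n\|<1$, each factor is invertible, $(E+V_n)^{-1}=E+\widetilde V_n$ with $\widetilde V_n=-V_n+V_n^2-\cdots$; then $\sum\widetilde V_n$ is again conditionally convergent, its tail $\widetilde Q_n$ differs from $-Q_n$ by an absolutely convergent series, and $\sum\|\widetilde V_n\widetilde Q_n\|<\infty$, so $\prod(E+V_n)^{-1}$ converges. Passing to the limit in $P_n\,P_n^{-1}=E$ shows the two limits are mutual inverses, whence the limit of $P_n$ is invertible. For the $2\times2$ matrices at hand one may instead note $\det P_n=\prod_k\bigl(1+\operatorname{tr}V_k+\det V_k\bigr)$ and check directly that this scalar product tends to a nonzero limit, using conditional convergence of $\sum\operatorname{tr}V_k$ and absolute convergence of the quadratic corrections.
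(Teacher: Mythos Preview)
The paper does not prove this theorem: it is quoted from Benzaid--Lutz~\cite{9}, Theorem~3.2, and used as a black box. There is therefore no in-paper proof to compare your proposal against.

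On its merits your sketch is sound and is the standard route. The Abel summation on $\sum P_{n-1}V_n$ together with a tail bootstrap to secure $\sup_n\|P_n\|<\infty$ is exactly the mechanism behind the Harris--Lutz substitution $P_n\mapsto P_n(E+Q_{n+1})$ that underlies the original argument; the two unwind to the same identity. Your flag on the $\ell_2$ point is accurate and is the one genuinely delicate spot: after Abel summation the quantity that must be summed is $V_{n-1}Q_n=V_{n-1}Q_{n-1}-V_{n-1}^{\,2}$, and the stated hypothesis $\sum\|V_nQ_n\|<\infty$ controls only the first piece. In the scalar case the telescoping identity $Q_n^2-Q_{n+1}^2=2V_nQ_n-V_n^2$ together with positivity of $V_n^2$ does recover $\sum V_n^2<\infty$ from the hypotheses, but for matrices this fails in general, so strictly speaking $\{\|V_n\|\}\in\ell_2$ (equivalently $\sum\|V_nQ_{n+1}\|<\infty$) must be assumed --- it may well be explicit in the original Benzaid--Lutz formulation and elided in the paper's paraphrase. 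As you note, in the application $\|V_n\|=O(1/n)$, so nothing is at stake there. Your invertibility argument via the product of inverses is correct, and the $2\times2$ determinant shortcut is a legitimate alternative.
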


In our case \(V_n=H_n+D_n\). The convergence of \(\sum\limits_{n=1}^\infty V_n\) is equivalent to the convergence of series 
$$
\sum\limits_{n=1}^\infty\dfrac{\sin2\phi_n}{n}\:\:\mbox{and}\:\:\sum\limits_{n=1}^\infty\dfrac{\cos2\phi_n}{n}
$$ 
or
$$
\sum\limits_{n=1}^\infty\dfrac{\sin(\beta\,n^s+\gamma)}{n}\:\:\mbox{and}\:\:\sum\limits_{n=1}^\infty\dfrac{\cos(\beta\,n^s+\gamma)}{n}\,,
$$ 
where \(\beta=-\dfrac{2x}{1-\alpha}\), \(s=1-\alpha\), \(\gamma=2xc\). Consider for example the second one. Let
$$
f(t)=\dfrac{\cos(\beta\,t^s+\gamma)}{t}\,.
$$
We have the formula~\cite{10} (\(m<n)\) :
\begin{equation}
\label{sum_integral_formula}
\sum_{k=m}^{n}f(k)=\int\limits_m^nf(t)\,dt+\frac{f(n)+f(m)}{2}-\int\limits_m^nP_1(t)f'(t)\,dt\,,
\end{equation}
$$
P_1(t)=[t]-t+\frac{1}{2}=\frac{1}{2}-\{t\}\,,\quad |P_1(t)|\le\frac{1}{2}\,.
$$
From this formula it follows that the convergence of series 
$$
\sum\limits_{n=1}^\infty\dfrac{\cos(\beta\,n^s+\gamma)}{n}\quad (\beta\ne 0)
$$
is equivalent to the convergence of integrals
$$
\int\limits_1^{+\infty}f(t)\,dt\,,\quad \int\limits_1^{+\infty}|f'(t)|\,dt\,.
$$
First of these integrals can be transformed to
\begin{equation}
\label{formula_2}
\int\limits_1^{+\infty}f(t)\,dt=\int\limits_1^{+\infty}\dfrac{\cos(\beta\,t^s+\gamma)}{t}\,dt=
(y=\beta\,t^s)=\frac{1}{s}\int\limits_\beta^{+\infty}\dfrac{\cos(y+\gamma)}{y}\,dy\,.
\end{equation}
(We can consider \(x<0\), so \(\beta>0\).) The last integral converges by Dirichlet's test. The convergence of second integral
$$
\int\limits_1^{+\infty}|f'(t)|\,dt
$$
can be easily proved by calculating the derivative \(f'(t)\).

By the same way one can prove the convergence of series
$$
\sum\limits_{n=1}^\infty\dfrac{\sin(\beta\,n^s+\gamma)}{n}\quad (\beta\ne 0)\,.
$$
Using again the formula~(\ref{sum_integral_formula}), we obtain
$$
\sum\limits_{k=n}^\infty\frac{\cos(\beta\,k^s+\gamma)}{k}=\sum_{k=n}^{\infty}f(k)=\int\limits_n^{+\infty}f(t)\,dt+\frac{f(n)}{2}-\int\limits_n^{+\infty}P_1(t)f'(t)\,dt\,.
$$
It is easy to check that
$$
\int\limits_n^{+\infty}P_1(t)f'(t)\,dt=O\left(\frac{1}{n^{2-s}}\right)=O\left(\frac{1}{n^{1+\alpha}}\right)\,,\quad n\to\infty\,.
$$
Using the same substitution as in~(\ref{formula_2}), we have
$$
\int\limits_n^{+\infty}f(t)\,dt=\frac{1}{s}\int\limits_{\beta\,n^s}^{+\infty}\dfrac{\cos(y+\gamma)}{y}\,dy\,.
$$
Integrating by parts, we find
\begin{equation}
\label{asymptotics_of_int_cos}
\int\limits_n^{+\infty}f(t)\,dt=O\left(\frac{1}{n^s}\right)=O\left(\frac{1}{n^{1-\alpha}}\right)\,,\quad n\to\infty\,,
\end{equation}
so that
$$
\sum_{k=n}^{\infty}f(k)=O\left(\frac{1}{n^{1-\alpha}}\right)\,,\quad n\to\infty\,.
$$
The same estimation will take place and for the \(\sin\)-sum :
$$
\sum\limits_{k=n}^\infty\frac{\sin(\beta\,k^s+\gamma)}{k}=O\left(\frac{1}{n^{1-\alpha}}\right)\,,\quad n\to\infty\,.
$$
It follows that \(\|Q_n\|=O\left(\dfrac{1}{n^{1-\alpha}}\right)\) and \(\|V_nQ_n\|=O\left(\dfrac{1}{n^{2-\alpha}}\right)\), so that \(\{V_nQ_n\}\in l_1\).

Applying Theorem~(\ref{infinite_product_convergence}), we obtain that for \(x\ne 0\) the product \(G_k\) converges to an invertible matrix :
$$
\lim\limits_{k\to\infty}G_k\equiv G(x)\equiv \prod\limits_{n=1}^\infty(E+H_n+D_n)\,.
$$
To understand the form of \(G(x)\), let us consider a continuous analogue of \(G_k\). From definition
$$
G_k=\prod_{n=1}^k(E+H_n+D_n)
$$
we have the following recurrent relations
$$
G_{n+1}=(E+H_{n+1}+D_{n+1})\,G_n\,.
$$
Omitting the uniformly with \(x\) summable perturbation \(D_n\) that not changes essentially the asymptotics~\cite{9}, we have
\begin{equation}
\label{discrete_system}
G_{n+1}=(E+H_{n+1})\,G_n\,,
\end{equation}
or
$$
G_{n+1}-G_n=H_{n+1}\,G_n\,.
$$
Replacing \((G_{n+1}-G_n)\) by \(\dfrac{dG_n}{dn}\), we obtain the following matrix differential equation
$$
\frac{dG_n}{dn}=H_n\,G_n
$$
or
\begin{equation}
\label{equation_G_n}
\frac{dG_n}{dn}=\frac{\alpha}{2n}\left(\begin{array}{cc} 
		\cos(\beta n^s+\gamma) & \sin(\beta n^s+\gamma)\\
       \sin(\beta n^s+\gamma) & -\cos(\beta n^s+\gamma)
                \end{array}\right)G_n\,.
\end{equation}
Let \(Z_n\) be solution of this equation with diagonal matrix:
$$
Z_n=\left(\begin{array}{cc} 
		e^{c_n} & 0\\
        0 & e^{-c_n}
                \end{array}\right)\,,\quad c_n=\frac{\alpha}{2}\int\limits_1^n\frac{\cos(\beta t^s+\gamma)}{t}\,dt\,.
$$
Then after substitution \(G_n=Z_n\,V_n\), we have
$$
\frac{dV_n}{dn}=\frac{\alpha\sin(\beta n^s+\gamma)}{2n}\,Z_n^{-1}\left(\begin{array}{cc} 
		0 & 1 \\
        1 & 0
                \end{array}\right)Z_n\,V_n
$$
or
\begin{equation}
\label{equation_V_n}
\frac{dV_n}{dn}=\frac{\alpha\sin(\beta n^s+\gamma)}{2n}\,W_n\,V_n\,,
\end{equation}
where
$$
W_n=\left(\begin{array}{cc} 
		0 & e^{-2c_n} \\
        e^{2c_n} & 0
                \end{array}\right)\,.
$$
Because 
$$
c_n\to c=\frac{\alpha}{2}\int\limits_1^{+\infty}\frac{\cos(\beta t^s+\gamma)}{t}\,dt\,,
$$
the matrix \(W_n\) tends to the constant matrix $W$ at $n\to\infty$ :
$$
\lim\limits_{n\to\infty}W_n=W=\left(\begin{array}{cc} 
		0 & e^{-2c} \\
        e^{2c} & 0
                \end{array}\right)\,.
$$
So, we can regard that in~(\ref{equation_V_n}) for large $n$ the matrix $W_n$ is constant in compare with  oscillating sin-factor. If we replace in~(\ref{equation_V_n}) \(W_n\) by \(W\), we can solve this equation exactly :
\begin{equation}
\label{equation_V}
\frac{dV_n}{dn}=\frac{\alpha\sin(\beta n^s+\gamma)}{2n}\,W\,V_n\,,
\end{equation}
$$ 
V_n=e^{s_nW}\,V_1\,,\quad  s_n=\frac{\alpha}{2}\int\limits_1^{n}\frac{\sin(\beta t^s+\gamma)}{t}\,dt\,,\quad s_n\to s_{\infty}=\frac{\alpha}{2}\int\limits_1^{+\infty}\frac{\sin(\beta t^s+\gamma)}{t}\,dt\,,
$$
and 
$$
\lim\limits_{n\to\infty} V_n=V=e^{s_\infty W}\,V_1\,.
$$
Thus we have \(G_n=Z_nV_n\to G=Z\,V\), where
$$
Z=\lim\limits_{n\to\infty}Z_n=\left(\begin{array}{cc} 
		e^{c} & 0\\
        0 & e^{-c}
                \end{array}\right)\,,\quad V=e^{s_\infty W}\,V_1\,.
$$
Because \(W^2=E\), expanding the exponent in a Taylor series, we can write
$$
e^{s_\infty W}=\mbox{ch}\,s_\infty E+\mbox{sh}\,s_\infty W
$$ 
and
$$
G=\left(\begin{array}{cc} 
		e^{c}\,\mbox{ch}\,s_\infty & e^{-c}\,\mbox{sh}\,s_\infty \\
        e^{c}\,\mbox{sh}\,s_\infty & e^{-c}\,\mbox{ch}\,s_\infty
                \end{array}\right)\,V_1\,.
$$
The matrix \(V_1\) is determined by initial condition : \(V_1=G_1=E+H_1+D_1\).
  
Our goal now is to find the behaviour of \(G\equiv G(x)\) at \(x\to 0\). The transformation as in formula~(\ref{formula_2}) gives
$$
c\sim-\frac{\alpha}{2s}\ln|\beta|\,,\: x\to 0\,,\qquad
\lim\limits_{x\to 0}s_\infty=s_0\,,\: |s_0|<+\infty\,.
$$
Remember that \(\beta=-\dfrac{2x}{1-\alpha}\), \(s=1-\alpha\), we obtain
$$
e^c\sim\delta\,|x|^{-\dfrac{\alpha}{2(1-\alpha)}}\,,\:x\to 0\,,
$$
where \(\delta\) is non zero constant. Then as \(x\to 0\)
\begin{equation}
\label{G_xto0}
G\sim\left(\begin{array}{cc} 
		\delta\,\mbox{ch}\,s_0\,|x|^{\textstyle-\frac{\alpha}{2(1-\alpha)}} & \delta\,\mbox{sh}\,s_0\,|x|^{\textstyle\frac{\alpha}{2(1-\alpha)}} \\
        \delta\,\mbox{sh}\,s_0\,|x|^{\textstyle-\frac{\alpha}{2(1-\alpha)}} & \delta\,\mbox{ch}\,s_0\,|x|^{\textstyle\frac{\alpha}{2(1-\alpha)}}
                \end{array}\right)\,V_1\,.
\end{equation}
Note that careful analysis of above formulas shows that \(D_1\) is a diagonal matrix at \(x=0\) and non diagonal elements of it are \(O(x)\) as \(x\to 0\). Thus as \(x\to 0\) we have
\begin{equation}
\label{V1_xto0}
V_1=E+H_1+D_1\to \left(\begin{array}{cc} 
		v_{11} & O(x) \\
        O(x) & v_{22}
                \end{array}\right)\,,\quad v_{11}, v_{22}\ne 0\,.
\end{equation}
Taking into account (\ref{asymptotics_F_k}) and (\ref{formula_prod_A_i}), we can write
$$
A_kA_{k-1}\ldots A_1=\frac{(-1)^k}{(k+1)^{\alpha/2}}\left(\begin{array}{cc} \cos\phi_k & \sin\phi_k\\
                                   -\sin\phi_k & \cos\phi_k
                \end{array}\right)G(x)\,(E+o(1))\,,\quad k\to\infty\,,
$$
and since
$$
\left(\begin{array}{c}
                     P_{2k+1} \\
                     P_{2k+2} \\
                     \end{array}\right)=A_kA_{k-1}\ldots A_1\left(\begin{array}{c}
                     P_{1} \\
                     P_{2} \\
                     \end{array}\right)\,,
$$
compare these formulas to the asymptotic formulas (\ref{asymptotics_P_n(x)}) for 1st type polynomials \(P_n(x)\), we find that the function \(C(x)\) in (\ref{asymptotics_P_n(x)}) is defined by
$$
G(x)\left(\begin{array}{c}
                     P_{1} \\
                     P_{2} \\
                     \end{array}\right)\,.
$$
Substituting (\ref{G_xto0}),(\ref{V1_xto0}) and \(P_1(x)=\dfrac{x}{b_0}\), 
\(P_2(x)=\dfrac{x^2-b_0^2}{b_0\,b_1}\) in this expression, we obtain as \(x\to 0\)
$$
G(x)\left(\begin{array}{c}
                     P_{1} \\
                     P_{2} 
                     \end{array}\right)\sim\left(\begin{array}{cc} 
		\delta\,\mbox{ch}\,s_0\,|x|^{\textstyle-\frac{\alpha}{2(1-\alpha)}} & \delta\,\mbox{sh}\,s_0\,|x|^{\textstyle\frac{\alpha}{2(1-\alpha)}} \\
        \delta\,\mbox{sh}\,s_0\,|x|^{\textstyle-\frac{\alpha}{2(1-\alpha)}} & \delta\,\mbox{ch}\,s_0\,|x|^{\textstyle\frac{\alpha}{2(1-\alpha)}}
                \end{array}\right)
                \left(\begin{array}{cc} 
		v_{11} & O(x) \\
        O(x) & v_{22}
                \end{array}\right)
                \left(\begin{array}{c}
                     \dfrac{x}{b_0} \\
                     \dfrac{x^2-b_0^2}{b_0\,b_1} 
                     \end{array}\right)=
$$ 
$$
=|x|^{\textstyle\frac{2-3\alpha}{2(1-\alpha)}}\,\vec{e}\,,\quad \|\vec{e}\|\ne0\,.
$$
Hence
$$
C(x)\sim C_0\,|x|^{\textstyle\frac{2-3\alpha}{2(1-\alpha)}}\,,\:x\to 0\,,\quad C_0\ne0\,.
$$
From (\ref{formula_for_spectral_density}) we obtain the following result:
\begin{theorem}
\label{main_theorem}
If \(\:\frac{1}{2}<\alpha<1\), then the spectral density \(f(x)\) has the following asymptotic behaviour at \(x=0\):
$$
f(x)\sim f_0\,|x|^{\textstyle\frac{3\alpha-2}{1-\alpha}}\,,\:x\to 0\,,\quad f_0\ne0\,.
$$
\end{theorem}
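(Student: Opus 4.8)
The plan is to reduce the theorem to the amplitude asymptotic $C(x)\sim C_0\,|x|^{(2-3\alpha)/(2(1-\alpha))}$ as $x\to 0$ (with $C_0\neq 0$), established above, and then feed it into the spectral density formula~(\ref{formula_for_spectral_density}). Since $f(x)=1/(\pi C^2(x))$ for $x\neq 0$, squaring the amplitude and inverting gives at once
$$
f(x)=\frac{1}{\pi C^2(x)}\sim\frac{1}{\pi C_0^2}\,|x|^{-\frac{2-3\alpha}{1-\alpha}}=f_0\,|x|^{\frac{3\alpha-2}{1-\alpha}}\,,\qquad f_0=\frac{1}{\pi C_0^2}\neq 0\,,
$$
which is the assertion. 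Thus the concluding step is a one-line computation, and the whole weight of the theorem rests on the amplitude asymptotic. For completeness I would reconstruct that asymptotic as follows.

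First I would separate oscillation from amplitude in the transfer product $A_kA_{k-1}\cdots A_1$. Pulling out the scalar $(-1)^k/(k+1)^{\alpha/2}$ and the pure rotation $F_k$ of~(\ref{asymptotics_F_k}) leaves the matrix product $G_k=\prod_{n=1}^k(E+H_n+D_n)$, which collects all the $x$-dependence of the amplitude. Because $\sum_n\|H_n\|=\infty$, convergence of $G_k$ is only conditional, so I would invoke Theorem~(\ref{infinite_product_convergence}): the partial sums of $H_n$ converge by Dirichlet's test applied to $\sum_n \cos(\beta n^s+\gamma)/n$ and its sine analogue, and the tail estimate $\|Q_n\|=O(1/n^{1-\alpha})$ forces $\{\|V_nQ_n\|\}\in l_1$.

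Next I would identify the limit $G(x)$ through the continuous analogue~(\ref{equation_G_n}). Diagonalizing the oscillatory part by $G_n=Z_nV_n$ and freezing the slowly varying factor $W_n\to W$ reduces the problem to the explicit integrals $c$ and $s_\infty$. The decisive point is the logarithmic divergence $c\sim-\tfrac{\alpha}{2(1-\alpha)}\ln|\beta|$ as $x\to 0$, which yields $e^c\sim\delta\,|x|^{-\alpha/(2(1-\alpha))}$ with $\delta\neq 0$, while $s_\infty\to s_0$ remains finite. Consequently the two columns of $G(x)$ scale as $|x|^{\mp\alpha/(2(1-\alpha))}$.

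The step I expect to be the main obstacle is the final contraction $G(x)\,(P_1,P_2)^{\top}$, where a delicate cancellation produces the exponent. The column of $G(x)$ that blows up like $|x|^{-\alpha/(2(1-\alpha))}$ is paired precisely against the vanishing data $P_1(x)=x/b_0=O(x)$ and against the $O(x)$ off-diagonal entries of $V_1$ (here one must verify that $D_1$ is diagonal at $x=0$ with off-diagonal terms $O(x)$). The surviving contribution is therefore of order $|x|\cdot|x|^{-\alpha/(2(1-\alpha))}=|x|^{(2-3\alpha)/(2(1-\alpha))}$, and the remaining care is to check that its coefficient does not vanish, i.e. that the limiting vector $\vec{e}$ satisfies $\|\vec{e}\|\neq 0$, so that $C_0\neq 0$. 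Once this is secured, the exponent $(2-3\alpha)/(2(1-\alpha))$ is pinned down and the routine algebra of the first paragraph completes the proof.
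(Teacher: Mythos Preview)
Your proposal is correct and follows essentially the same route as the paper: factor the transfer product as $\frac{(-1)^k}{(k+1)^{\alpha/2}}F_kG_k$, prove conditional convergence of $G_k$ via Theorem~(\ref{infinite_product_convergence}), pass to the continuous analogue~(\ref{equation_G_n}) with the substitution $G_n=Z_nV_n$, extract the blow-up $e^c\sim\delta\,|x|^{-\alpha/(2(1-\alpha))}$ while $s_\infty$ stays bounded, and then track the cancellation in $G(x)(P_1,P_2)^\top$ using $P_1=O(x)$ and the $O(x)$ off-diagonal of $V_1$ to arrive at $C(x)\sim C_0\,|x|^{(2-3\alpha)/(2(1-\alpha))}$. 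The final passage to $f(x)$ via~(\ref{formula_for_spectral_density}) is exactly the one-line computation you give, and you have correctly flagged the delicate points (nonvanishing of $\vec{e}$, structure of $D_1$ at $x=0$) that the paper also singles out.
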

\begin{corollary*}
In this class of operators we have an example of 2nd type spectral phase transition. In particular
$$
\lim\limits_{x\to0}f(x)=\left\{\begin{array}{l} +\infty\,,\qquad \frac{1}{2}<\alpha<\frac{2}{3}\\
											    f_0\ne0\,,\quad \alpha=\frac{2}{3}\\
											    \:0\,, \quad\qquad \frac{2}{3}<\alpha<1
											    \end{array}\right.\,.
$$
\end{corollary*}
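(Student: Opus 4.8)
The plan is to read the theorem off directly from the analysis already carried out in the running text, since the substantive work is completed before the statement. The one relation I need is formula~(\ref{formula_for_spectral_density}), namely $f(x)=1/(\pi C^2(x))$ for $x\ne0$, which reduces everything to the small-$x$ behaviour of the constant $C(x)$ in the polynomial asymptotics~(\ref{asymptotics_P_n(x)}). First I would take as established the asymptotic $C(x)\sim C_0\,|x|^{(2-3\alpha)/(2(1-\alpha))}$ derived above, square it to obtain $C^2(x)\sim C_0^2\,|x|^{(2-3\alpha)/(1-\alpha)}$, and then invert to get $f(x)\sim (1/(\pi C_0^2))\,|x|^{-(2-3\alpha)/(1-\alpha)}=f_0\,|x|^{(3\alpha-2)/(1-\alpha)}$ with $f_0=1/(\pi C_0^2)\ne0$. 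This is exactly the claimed asymptotic and constitutes the entire content of the theorem.

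For the corollary I would simply examine the sign of the exponent $E(\alpha)=(3\alpha-2)/(1-\alpha)$ on $(1/2,1)$. Since the denominator $1-\alpha$ is strictly positive there, the sign of $E(\alpha)$ agrees with the sign of $3\alpha-2$: the exponent is negative for $\alpha<2/3$, vanishes at $\alpha=2/3$, and is positive for $\alpha>2/3$. Passing to the limit $x\to0$ in each regime yields $f(x)\to+\infty$, $f(x)\to f_0$, and $f(x)\to0$ respectively, which is precisely the three-case statement of the corollary.

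The genuine difficulty lies not in this final substitution but in the derivation that precedes the statement, which I regard as the true body of the proof. The delicate steps are: factoring the transfer-matrix product $A_k\cdots A_1$ into the oscillating orthogonal factor $F_k$ and the slowly varying product $G_k$; establishing conditional (non-absolute) convergence of $G_k$ via Theorem~(\ref{infinite_product_convergence}), which rests on the tail estimate $\|Q_n\|=O(n^{-(1-\alpha)})$ for the $\sin$- and $\cos$-series; and identifying the limit $G(x)$ through its continuous (ODE) analogue. The single most fragile point is extracting the exact power of $|x|$ from $G(x)$: one must show that $e^{c}\sim\delta\,|x|^{-\alpha/(2(1-\alpha))}$ while $s_\infty$ remains bounded, and crucially that applying $G(x)$ to the initial vector $(P_1,P_2)^{\top}=(x/b_0,\,(x^2-b_0^2)/(b_0 b_1))^{\top}$ does not annihilate the leading singular term, i.e. that $\|\vec e\|\ne0$ in the final display. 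It is this no-cancellation check, together with the justification that replacing $W_n$ by its limit $W$ and discarding the $l_1$ perturbation $D_n$ leaves the resulting $x$-power unchanged, that I expect to be the main obstacle.
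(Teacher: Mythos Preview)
Your proposal is correct and follows the paper's approach exactly: the paper gives no separate proof for the Corollary, treating it as an immediate consequence of Theorem~\ref{main_theorem} by reading off the sign of the exponent $(3\alpha-2)/(1-\alpha)$, and Theorem~\ref{main_theorem} itself is obtained precisely by squaring the $C(x)$ asymptotic and inverting via~(\ref{formula_for_spectral_density}). Your identification of where the real work lies (the factorisation into $F_kG_k$, conditional convergence of $G_k$, the ODE analogue, and the no-cancellation check on $G(x)(P_1,P_2)^\top$) matches the paper's actual development in Section~3.
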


Let's consider now the solution \(V_n\) in more detail. The next lemma establishes a connection between solutions of equations~(\ref{equation_V_n}) and~(\ref{equation_V}).

\begin{lemma}
\label{asymptotically_equivalence}
For any solution \(\tilde{V}_n\) of the equations~(\ref{equation_V}) there exist the solution \(V_n\) of the equation~(\ref{equation_V_n}) which is asymptotically equivalent to \(\tilde{V}_n\) :
$$
V_n=\tilde{V}_n(E+o(1))\,,\quad n\to\infty\,.
$$
\end{lemma}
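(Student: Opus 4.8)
The plan is to view (\ref{equation_V_n}) as an $l_1$-perturbation of the exactly solvable equation (\ref{equation_V}) and to run a continuous Levinson-type argument. Writing the common scalar factor as $\rho_n=\frac{\alpha\sin(\beta n^s+\gamma)}{2n}$, the two equations read $\tilde V_n'=\rho_n W\tilde V_n$ and $V_n'=\rho_n W_n V_n$, so their coefficient matrices differ only by $\rho_n(W_n-W)$. The first step is to estimate $W_n-W$. From $c-c_n=\frac{\alpha}{2}\int_n^{+\infty}\frac{\cos(\beta t^s+\gamma)}{t}\,dt=O\!\left(n^{-(1-\alpha)}\right)$, which is exactly (\ref{asymptotics_of_int_cos}), the off-diagonal entries $e^{\pm2c_n}-e^{\pm2c}$ are $O(c_n-c)=O(n^{-s})$ with $s=1-\alpha$. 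Hence $\|\rho_n(W_n-W)\|\le\frac{\alpha}{2n}\|W_n-W\|=O\!\left(n^{-(1+s)}\right)$, and since $s>0$ this perturbation is integrable on $[1,+\infty)$.

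Next I would substitute $V_n=\tilde V_n\,\Phi_n$ (taking $\tilde V_n$ to be a fundamental, hence invertible, solution) into (\ref{equation_V_n}). Using $\tilde V_n'=\rho_n W\tilde V_n$ the terms carrying $W$ cancel and one is left with $\tilde V_n\Phi_n'=\rho_n(W_n-W)\tilde V_n\Phi_n$, that is $\Phi_n'=M(n)\Phi_n$, where $M(n)=\rho_n\,\tilde V_n^{-1}(W_n-W)\tilde V_n$. The point of this conjugation is that the exact solution $\tilde V_n=e^{s_nW}V_1$ of (\ref{equation_V}) converges to the invertible matrix $V=e^{s_\infty W}V_1$; because $s_n$ stays bounded and $W^2=E$, the family $\{\tilde V_n\}$ together with $\{\tilde V_n^{-1}\}$ is uniformly bounded. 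Consequently $\|M(n)\|\le\|\tilde V_n^{-1}\|\,\|\rho_n(W_n-W)\|\,\|\tilde V_n\|=O\!\left(n^{-(1+s)}\right)$, so $\{\|M(n)\|\}$ is again integrable.

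Finally, since the desired normalization is $\Phi_n\to E$ at infinity rather than at a finite point, I would impose the boundary condition at $+\infty$ and recast $\Phi_n'=M(n)\Phi_n$ as the integral equation $\Phi_n=E-\int_n^{+\infty}M(t)\Phi_t\,dt$. Fixing $n_0$ so large that $\int_{n_0}^{+\infty}\|M(t)\|\,dt<1$, the map sending $\Phi$ to the matrix function $n\mapsto E-\int_n^{+\infty}M(t)\Phi_t\,dt$ is a contraction on the Banach space of bounded continuous matrix functions on $[n_0,+\infty)$ equipped with the supremum norm, which yields a unique bounded solution $\Phi_n$. The tail estimate $\|\Phi_n-E\|\le\big(\sup_m\|\Phi_m\|\big)\int_n^{+\infty}\|M(t)\|\,dt\to0$ then gives $\Phi_n=E+o(1)$, whence $V_n=\tilde V_n\Phi_n=\tilde V_n(E+o(1))$ is a solution of (\ref{equation_V_n}) asymptotically equivalent to $\tilde V_n$.

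The main obstacle is the quantitative control of $W_n-W$: the whole argument rests on the decay rate $c_n-c=O\!\left(n^{-(1-\alpha)}\right)$ extracted from the oscillatory-integral estimate (\ref{asymptotics_of_int_cos}), for this is precisely what pushes the perturbation $\rho_n(W_n-W)$ into $l_1$. Once integrability is in hand, the uniform boundedness of $\tilde V_n^{\pm1}$ (guaranteed by the convergence of $s_n$ and by $W^2=E$) and the contraction on $[n_0,+\infty)$ are routine; the only delicate modeling choice is to integrate from $+\infty$, which is exactly what forces the limit of $\Phi_n$ to be $E$ and hence delivers the stated asymptotic equivalence.
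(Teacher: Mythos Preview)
Your argument is correct and follows the same skeleton as the paper's proof: the substitution $V_n=\tilde V_n\Phi_n$, the key decay estimate $\|W_n-W\|=O(n^{-(1-\alpha)})$ drawn from (\ref{asymptotics_of_int_cos}), and the integral equation anchored at $+\infty$ are all exactly what the paper uses (its equation~(\ref{asymptot_V_n}) is your $\Phi_n=E-\int_n^{+\infty}M(t)\Phi_t\,dt$ written in terms of $V_t$). The one genuine difference is how boundedness of the solution is secured: you obtain it \emph{a posteriori} via a contraction on $[n_0,+\infty)$, while the paper obtains it \emph{a priori} by applying a Harris--Lutz transform $V_n=(E+Q_n)Y_n$ with $Q_n=-\frac{\alpha}{2}\int_n^{+\infty}\frac{\sin(\beta t^s+\gamma)}{t}W_t\,dt$ to the original equation~(\ref{equation_V_n}) and then invoking Gronwall. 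Your route is somewhat cleaner, since it constructs the desired solution directly and avoids the extra transform; the paper's route has the advantage of showing that \emph{every} solution of (\ref{equation_V_n}) is bounded, which it later uses implicitly.
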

\begin{proof}
We have
$$
\frac{dV_n}{dn}=\frac{\alpha\sin(\beta n^s+\gamma)}{2n}W_nV_n
$$
$$
\frac{d\tilde{V}_n}{dn}=\frac{\alpha\sin(\beta n^s+\gamma)}{2n}W\,\tilde{V}_n
$$
Transforming the first equation
$$
\frac{dV_n}{dn}=\frac{\alpha\sin(\beta n^s+\gamma)}{2n}W_nV_n=
\frac{\alpha\sin(\beta n^s+\gamma)}{2n}W\,V_n+\frac{\alpha\sin(\beta n^s+\gamma)}{2n}(W_n-W)\,V_n\,,
$$
we can write this differential equation as integral one
\begin{equation}
\label{asymptot_V_n}
V_n=\tilde{V}_n\left(E-\frac{\alpha}{2}\int\limits_n^{+\infty}\frac{\sin(\beta t^s+\gamma)}{t}\,\tilde{V}_t^{-1}\,(W_t-W)\,V_t\,dt\right)\,.
\end{equation}
It is easily verified by differentiation. Let's prove that the integral in this expression converges. 
First, the function \(\tilde{V}_t=e^{s_t W}\,{\tilde V}_1\) is obviously bounded. Secondly, we have
$$
W_t-W=\left(\begin{array}{cc} 
		0 & e^{-2c_n}-e^{-2c} \\
        e^{2c_n}-e^{2c} & 0
                \end{array}\right)=\left(\begin{array}{cc} 
		0 & e^{-2c}(e^{2(c-c_n)}-1) \\
        e^{2c}(e^{2(c_n-c)}-1) & 0
                \end{array}\right)\,.
$$
Because (see formula (\ref{asymptotics_of_int_cos}) )
$$
c-c_n=\frac{\alpha}{2}\int\limits_n^{+\infty}\frac{\cos(\beta t^s+\gamma)}{t}\,dt=O\left(\frac{1}{n^{1-\alpha}}\right)\,,\quad n\to\infty\,,
$$
we have \(\|W_t-W\|=O\left(\dfrac{1}{t^{1-\alpha}}\right)\), \(t\to+\infty\). Let's prove that the function \(V_t\) is also bounded. Making Harris-Lutz transform
$$
V_n=(E+Q_n)\,Y_n\,,\quad Q_n=-\frac{\alpha}{2}\int\limits_n^{+\infty}\frac{\sin(\beta t^s+\gamma)}{t}\,W_t\,dt=O\left(\frac{1}{n^{1-\alpha}}\right)\,,\quad n\to\infty\,,
$$ 
we obtain for \(Y_n\) the following equation
$$
\frac{dY_n}{dn}=\frac{\alpha\sin(\beta n^s+\gamma)}{2n}\,(E+Q_n)^{-1}\,Q_nY_n\,,\quad n\ge N\,.
$$
The restriction \(n\ge N\) is motivated by the existence of \((E+Q_n)^{-1}\). Passing to the integral equation
$$
Y_n=Y_N+\frac{\alpha}{2}\int\limits_N^{n}\frac{\sin(\beta t^s+\gamma)}{t}\,(E+Q_t)^{-1}\,Q_t\,Y_t\,dt\,,
$$
we have the estimate
$$
\|Y_n\|\le\|Y_N\|+\int\limits_N^{n}\frac{\alpha}{2\,t}\,\|(E+Q_t)^{-1}\|\,\|Q_t\|\,\|Y_t\|\,dt\,.
$$
Applying Gronwall's inequality, we obtain
$$
\|Y_n\|\le\|Y_N\|\exp\left\{\int\limits_N^{+\infty}\frac{\alpha}{2\,t}\,\|(E+Q_t)^{-1}\|\,\|Q_t\|\,\|Y_t\|\,dt\right\}<+\infty\,.
$$
Thus \(\|V_t\|<C<+\infty\) for \(t\ge 1\) and the integral in~(\ref{asymptot_V_n}) is convergent.
\end{proof}

\begin{corollary}
\label{finite_limit}
For any solution \(V_n\) of the equation (\ref{equation_V_n}) there exists a finite limit \(\lim\limits_{n\to\infty}V_n\).
\end{corollary}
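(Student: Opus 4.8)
The plan is to deduce the convergence of every solution of~(\ref{equation_V_n}) from the explicit convergence of solutions of its constant-coefficient companion~(\ref{equation_V}), using Lemma~(\ref{asymptotically_equivalence}) to transfer the information. First I would record that the solutions of~(\ref{equation_V}) are completely explicit: since $W^2=E$, one has $\tilde V_n=e^{s_nW}\tilde V_1=(\mbox{ch}\,s_n\,E+\mbox{sh}\,s_n\,W)\,\tilde V_1$, and because $s_n\to s_\infty$ with $|s_\infty|<\infty$ (already established from the convergence of $\int_1^{\infty}\frac{\sin(\beta t^s+\gamma)}{t}\,dt$), every such $\tilde V_n$ tends to the finite limit $\tilde V_\infty=e^{s_\infty W}\tilde V_1$. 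Thus the assertion is trivial for~(\ref{equation_V}); the whole content is to carry it over to~(\ref{equation_V_n}).

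By Lemma~(\ref{asymptotically_equivalence}), to each solution $\tilde V_n$ of~(\ref{equation_V}) there corresponds a solution $V_n$ of~(\ref{equation_V_n}) with $V_n=\tilde V_n(E+o(1))$, and for any such $V_n$ one gets $\lim_{n\to\infty}V_n=\tilde V_\infty$ at once. It then remains to check that the correspondence $\tilde V_n\mapsto V_n$ exhausts \emph{all} solutions of~(\ref{equation_V_n}). I would argue this by linear algebra: the map is linear (the defining integral relation~(\ref{asymptot_V_n}) is linear in the pair, and by uniqueness $V$ depends linearly on $\tilde V$) and injective, since putting $V_n\equiv0$ in~(\ref{asymptot_V_n}) annihilates the integral and forces $\tilde V_n\equiv0$. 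Both solution spaces are four-dimensional (a matrix solution is fixed by its value at one point, ranging over all of $M_2$), so an injective linear map between them is surjective. Hence every solution of~(\ref{equation_V_n}) has the form $\tilde V_n(E+o(1))$ and converges to the corresponding $\tilde V_\infty$.

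The step I expect to need the most care is this surjectivity claim, namely that Lemma~(\ref{asymptotically_equivalence}) genuinely produces a full four-parameter family and not a proper subfamily; the injectivity-plus-equal-dimension argument settles it, provided one is careful that the linear map is defined on the whole solution space. A more robust alternative that sidesteps this bookkeeping is to work directly with an arbitrary solution $V_n$ of~(\ref{equation_V_n}). The Harris--Lutz/Gronwall estimate from the proof of Lemma~(\ref{asymptotically_equivalence}), which uses only the equation and not the integral relation, already gives $\sup_n\|V_n\|<\infty$. Setting $\Psi_n=\tilde V_n^{-1}V_n$ with $\tilde V_n=e^{s_nW}$, a one-line differentiation yields $\Psi_n'=\frac{\alpha\sin(\beta n^s+\gamma)}{2n}\,\tilde V_n^{-1}(W_n-W)V_n$; since $\tilde V_n^{\pm1}$ are bounded ($\det e^{s_nW}=1$ and $s_n$ is bounded), $V_n$ is bounded, and $\|W_n-W\|=O(n^{-(1-\alpha)})$, we get $\|\Psi_n'\|=O(n^{-(2-\alpha)})$, which is integrable because $\alpha<1$. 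Therefore $\Psi_n$ converges, and so does $V_n=\tilde V_n\Psi_n$. Either route delivers the finite limit.
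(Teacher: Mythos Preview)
Your proposal is correct. The paper states this corollary without proof, leaving it as an immediate consequence of Lemma~(\ref{asymptotically_equivalence}); your first route (linearity plus dimension count to upgrade the lemma's existence statement to a bijection between solution spaces) is exactly the deduction the paper has in mind, made explicit.

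Your second route, via $\Psi_n=e^{-s_nW}V_n$, is a genuinely different and in some ways cleaner argument. It sidesteps the surjectivity bookkeeping entirely: you recycle only the Harris--Lutz/Gronwall boundedness step from the lemma's proof (which applies to \emph{any} solution of~(\ref{equation_V_n}), not just the one built from the integral equation), then observe directly that
\[
\Psi_n'=\frac{\alpha\sin(\beta n^s+\gamma)}{2n}\,e^{-s_nW}(W_n-W)\,V_n
\]
has norm $O(n^{-(2-\alpha)})\in L^1$, so $\Psi_n$ converges and hence so does $V_n=e^{s_nW}\Psi_n$. This buys you a proof that is logically independent of the full asymptotic correspondence in the lemma and avoids any worry about whether the map $\tilde V\mapsto V$ defined by~(\ref{asymptot_V_n}) is single-valued. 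The first route is closer to the paper's intent; the second is more robust.
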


Suppose that \(\tilde{V}_n=e^{s_n W}\), \(\tilde{V}_1=E\). Because \(W^2=E\), expanding the exponent in a Taylor series, we can write
$$
\tilde{V}_n= e^{s_n W}=\mbox{ch}\,s_n E+\mbox{sh}\,s_n W\,.
$$

From Lemma (\ref{asymptotically_equivalence}) it follows that there exists the solution \(V_n\) of~(\ref{equation_V_n}) such that \(\lim\limits_{n\to\infty}V_n=\lim\limits_{n\to\infty}\tilde{V}_n\). To find out the structure of this solution one can use the integral equation~(\ref{asymptot_V_n}). One has
$$
V_n=e^{s_n W}\left(E-\frac{\alpha}{2}\int\limits_n^{+\infty}\frac{\sin(\beta t^s+\gamma)}{t}\,e^{-s_n W}\,(W_t-W)\,V_t\,dt\right)\,.
$$
Using this equation one can construct an infinite series for unknown solution \(V_n\). Substituting this formula into itself, we obtain
$$
V_n=e^{s_n W}\left(E-\frac{\alpha}{2}\int\limits_n^{+\infty}\frac{\sin(\beta t^s+\gamma)}{t}\,e^{-s_t W}\,(W_t-W)\,e^{s_{t} W}\,dt+\right.
$$
$$
\left.+\left(\frac{\alpha}{2}\right)^2\int\limits_n^{+\infty}\frac{\sin(\beta t^s+\gamma)}{t}\,e^{-s_{t} W}\,(W_t-W)\,e^{s_{t} W}\,dt\int\limits_t^{+\infty}\frac{\sin(\beta t_1^s+\gamma)}{t_1}\,e^{-s_{t_1} W}\,(W_{t_1}-W)\,e^{s_{t_1} W}\,dt_1\right)
$$
Continuing this process we obtain the following series
$$
V_n=e^{s_n W}\left(E+\int\limits_n^{+\infty}K_{t_1}\,dt_1+\int\limits_n^{+\infty}K_{t_1}\,dt_1\int\limits_{t_1}^{+\infty}K_{t_2}\,dt_2+\right.
$$
\begin{equation}
\label{series_for_V_n}
\left.+\int\limits_n^{+\infty}K_{t_1}\,dt_1\int\limits_{t_1}^{+\infty}K_{t_2}\,dt_2\int\limits_{t_2}^{+\infty}K_{t_3}\,dt_3+\ldots\right)\,,
\end{equation}
where
$$
K_t=-\frac{\alpha\sin(\beta t^s+\gamma)}{2t}\,e^{-s_t W}\,(W_t-W)\,e^{s_{t} W}\,.
$$
To prove the convergence of this series and for further consideration it is convenient to make the substitution \(y=\beta\,n^s\) and \(y_i=\beta\,t_i^s\) in each integral (we will regard \(\beta>0\) ). Besides 
$$
\lim\limits_{n\to\infty}V_n=\lim\limits_{n\to\infty}V(\beta\,n^s)=\lim\limits_{y\to+\infty}V(y)\,.
$$
One has
$$
V(y)=e^{s(y) W}\left(E+\int\limits_y^{+\infty}K(y_1)\,dy_1+\int\limits_y^{+\infty}K(y_1)\,dy_1\int\limits_{y_1}^{+\infty}K(y_2)\,dy_2+\right.
$$
$$
\left.+\int\limits_y^{+\infty}K(y_1)\,dy_1\int\limits_{y_1}^{+\infty}K(y_2)\,dy_2\int\limits_{y_2}^{+\infty}K(y_3)\,dy_3+\ldots\right)\,,\quad y\ge \beta\,,
$$
where
$$
V_n=V(\beta\,n^s)\,,\quad K(y)=-\frac{\alpha\sin(y+\gamma)}{2sy}\,e^{-s(y) W}\,(W(y)-W)\,e^{s(y) W}=
$$
$$
=-\frac{\alpha\sin(y+\gamma)}{2sy}\,W\,e^{-s(y) W}\,(WW(y)-E)\,e^{s(y) W}\,,\quad s(y)=\frac{\alpha}{2s}\int\limits_\beta^{y}\frac{\sin(t+\gamma)}{t}\,dt\,,	
$$
$$
WW(y)-E=\left(\begin{array}{cc} 
		e^{2(c(y)-c)}-1 & 0\\
        0 & e^{-2(c(y)-c)}-1
                \end{array}\right)\,,
$$
$$
c(y)=\frac{\alpha}{2s}\int\limits_\beta^{y}\frac{\cos(t+\gamma)}{t}\,dt\,,\quad c(y)-c=-\frac{\alpha}{2s}\int\limits_y^{+\infty}\frac{\cos(t+\gamma)}{t}\,dt\,.
$$
It follows that \(\|WW(y)-E\|\le\dfrac{{\tilde C}}{y}\) and hence \(\|K(y)\|\le\dfrac{C}{y^2}\). Therefore
$$
\left\|\int\limits_y^{+\infty}K(y_1)\,dy_1\int\limits_{y_1}^{+\infty}K(y_2)\,dy_2\ldots\int\limits_{y_{n-1}}^{+\infty}K(y_n)\,dy_n\right\|\le 
C^n\int\limits_y^{+\infty}\frac{dy_1}{y_1^2}\int\limits_{y_1}^{+\infty}\frac{dy_2}{y_2^2}\ldots\int\limits_{y_{n-1}}^{+\infty}\frac{dy_n}{y_n^2}=\frac{1}{n!}\left(\frac{C}{y}\right)^n\,.
$$
This estimate proves at once the convergence of series~(\ref{series_for_V_n}) and inequality
$$
\left\|e^{-s(y)W}V(y)\right\|\le e^{C/y}\,.
$$
We are interested in the dependence as \(x\to 0\). Remembering that \(\beta=-\dfrac{2x}{1-\alpha}\), 
\(\gamma=2xc\), we have
$$
c(y)\sim-\frac{\alpha}{2s}\ln|\beta|\,,\: x\to 0\,,\qquad c\sim-\frac{\alpha}{2s}\ln|\beta|\,,\: x\to 0\,,\quad e^c\sim\delta\,|x|^{-\dfrac{\alpha}{2(1-\alpha)}}\,,\:x\to 0\,,
$$
$$
\lim\limits_{x\to 0}s(y)=\frac{\alpha}{2s}\int\limits_0^{y}\frac{\sin t}{t}\,dt\,,\quad
\lim\limits_{x\to 0}(c(y)-c)=-\frac{\alpha}{2s}\int\limits_y^{+\infty}\frac{\cos t}{t}\,dt
$$
It follows that the most essential role at \(x\to 0\) belongs to the matrix \(W\). Taking into account that \(e^{\pm s(y)W}=\mbox{ch}\,s(y) E\pm\mbox{sh}\,s(y) W\) and \(W^2=E\), we can present the matrix \(K(y)\) in the form
$$
\left(\begin{array}{cc}
			a(y) & m(y)\, e^{-2c}\\
			n(y)\, e^{2c} & b(y)
\end{array}\right)\,.
$$
Because the product of such matrices has the same structure, the same form will have and the solution \(V(y)\). Let's try to find the matrix \(V(y)\) in this form. After substitution \(y=\beta\,n^s\) the equation~(\ref{equation_V_n}) will take the form
$$
\frac{d V(y)}{dy}=\frac{\alpha\sin(y+\gamma)}{2sy}\,W(y)\,V(y)
$$
or
$$
\frac{d V(y)}{dy}=s'(y)\,W(y)\,V(y)\,.
$$
We will look for \(V(y)\) such above
\begin{equation}
\label{form_of_V(y)}
V(y)=\left(\begin{array}{cc}
			a(y) & m(y)\, e^{-2c}\\
			n(y)\, e^{2c} & b(y)
\end{array}\right)\,,
\end{equation}
where the functions \(a(y), b(y), n(y), m(y)\) have a finite limit at \(y\to+\infty\) according to the Corollary~(\ref{finite_limit}).  After substitution in the equation we will have
$$
\left(\begin{array}{cc}
			a'(y) & m'(y)\, e^{-2c}\\
			n'(y)\, e^{2c} & b'(y)
\end{array}\right)=s'(y)\left(\begin{array}{cc}
			n(y)\,e^{2(c-c(y))} & b(y)\, e^{-2c(y)}\\
			a(y)\, e^{2c(y)} & m(y)\,e^{-2(c-c(y))}
\end{array}\right)
$$
and this matrix equation decays into two independent systems
\begin{equation}
\label{systems_of_equations}
\left\{\begin{array}{l}
		a'(y)=s'(y)\,n(y)\,e^{2(c-c(y))}\\
		n'(y)=s'(y)\,a(y)\,e^{-2(c-c(y))}
\end{array}\right.\quad 
\left\{\begin{array}{l}
		m'(y)=s'(y)\,b(y)\,e^{2(c-c(y))}\\
		b'(y)=s'(y)\,m(y)\,e^{-2(c-c(y))}
\end{array}\right.\,.
\end{equation}
Let us consider for example the first system. After differentiation of the first equation and using the second one we will have the following differential equation for \(a(y)\) :
$$
a''=\left(\ln[s'(y)\,e^{2(c-c(y))}]\right)'\,a'+(s'(y))^2\,a
$$
or
\begin{equation}
\label{differential_equation_a}
a''=\left(\frac{\cos(y+\gamma)}{\sin(y+\gamma)}-\frac{1}{y}-\frac{\alpha\cos(y+\gamma)}{sy}\right)a'+\frac{\alpha^2\sin^2(y+\gamma)}{4s^2y^2}\,a\,.
\end{equation}
The parameter \(x\) enters here trough the constant \(\gamma\) only. Supposing the next application for \(x\to 0\), we can regard \(0<|\gamma|<1\). Then this equation has \(y=0\) as a regular point. Governing equation for~(\ref{differential_equation_a}) has the form~\cite{10}
$$
p(p-1)+\left(1+\frac{\alpha\cos\gamma}{s}\right)p-\frac{\alpha^2\sin^2\gamma}{4s^2}=0
$$
or
$$
p^2+\frac{\alpha\cos\gamma}{s}\,p-\frac{\alpha^2\sin^2\gamma}{4s^2}=0\,,
$$
whence
$$
p_1=\frac{\alpha(-\cos\gamma+1)}{2s}>0\,,\quad p_2=\frac{\alpha(-\cos\gamma-1)}{2s}<0\,.
$$
Hence the equation~(\ref{differential_equation_a}) has two linearly independent solutions \(A_1(y)\) and \(A_2(y)\) which have the following behaviour at the point \(y=0\) :
\begin{equation}
\label{form_of_solutions_at_0}
A_1(y)=y^{p_1}\,(c_0+c_1y+\ldots)\,,\quad A_2(y)=y^{p_2}\,(c'_0+c'_1y+\ldots)+c_{-1}\,A_1(y)\,\ln y\,,\quad c_0,c'_0\ne0\,.
\end{equation}
The first solution \(A_1(y)\) is bounded as \(y\to+0\) but the second one is unbounded and \(A_2(y)\sim c'_0\,y^{p_2}\) as \(y\to+0\).

Let's find now the behaviour of solutions as \(y\to+\infty\). Notice that if we omit the last term with \(a(y)\) in~(\ref{differential_equation_a}), then the equation can be solved exactly. Namely, consider the equation
$$
a''=\left(\frac{\cos(y+\gamma)}{\sin(y+\gamma)}-\frac{1}{y}-\frac{\alpha\cos(y+\gamma)}{sy}\right)a'\,.
$$ 
By obvious transformation it reduces to the form
$$
\left(\ln a'\right)'=\left(\ln\left(\frac{\sin(y+\gamma)}{y}\,e^{2(c-c(y))}\right)\right)'
$$
whence 
$$
a'(y)=C\,\frac{\sin(y+\gamma)}{y}\,e^{2(c-c(y))}\,,\quad C=const
$$
and
$$
a(y)=C\int\frac{\sin(y+\gamma)}{y}\,e^{2(c-c(y))}\,dy\,.
$$
Let us consider now the differential equation~(\ref{differential_equation_a}) as inhomogeneous equation with inhomogeneous part
$$
f(y)=\frac{\alpha^2\sin^2(y+\gamma)}{4s^2y^2}\,a(y)\,.
$$
That is consider the equation
$$
a''-\left(\frac{\cos(y+\gamma)}{\sin(y+\gamma)}-\frac{1}{y}-\frac{\alpha\cos(y+\gamma)}{sy}\right)a'
=f(y)\,.
$$
Using two linearly independent solutions of homogeneous equation
$$
a_1(y)=1\,,\quad a_2(y)=\int\limits_1^y\frac{\sin(t+\gamma)}{t}\,e^{2(c-c(t))}\,dt\,,
$$
we can construct Green's function \(G(y,t)\) such that 
$$
a(y)=C_1+C_2\,a_2(y)+\tilde{a}(y)\,,\quad\tilde{a}(y)=\int\limits_0^{+\infty}G(y,t)f(t)\,dt\,,\quad \tilde{a}(+\infty)=\tilde{a}'(+\infty)=0\,.
$$ 
We have conditions
$$
G(y,t)=g_1\,a_2(y)+g_2\,,\quad y\ge t\,,
$$
$$
G(y,t)=g_3\,a_2(y)+g_4\,,\quad y\le t\,,
$$
$$
G(t+0,t)=G(t-0,t)\Rightarrow g_1\,a_2(t)+g_2=g_3\,a_2(t)+g_4\,,
$$
$$
G'_y(t+0,t)-G'_y(t-0,t)=1 \Rightarrow g_1\,a'_2(t)-g_3\,a'_2(t)=1\,,
$$
$$
G(+\infty,t)=0 \Rightarrow g_1\,a_2(+\infty)+g_2=0\,,
$$
$$
G'_y(+\infty,t)=0 \Rightarrow g_1\,a'_2(+\infty)=0\,.
$$
Solving this system, we obtain
$$
G(y,t)=0\,,\quad y\ge t\,,
$$
$$
G(y,t)=\frac{a_2(t)-a_2(y)}{a'_2(t)}\,,\quad y\le t\,.
$$
For \(\tilde{a}(y)\) we will have then
$$
\tilde{a}(y)=\int\limits_y^{+\infty}\frac{a_2(t)-a_2(y)}{a'_2(t)}\,f(t)\,dt\,.
$$
Since
$$
\frac{f(t)}{a'_2(t)}=\frac{\alpha^2\sin(t+\gamma)}{4s^2t}\,a(t)\,e^{-2(c-c(t))}\,,
$$
all conditions are fulfilled and integral is convergent (remember that \(a(t)\to a\,,\:|a|<\infty\) ). Substituting the expression for \(\dfrac{f(t)}{a'_2(t)}\), we obtain
\begin{equation}
\label{integral_equation_for_a}
a(y)=C_1+C_2\,a_2(y)+\frac{\alpha^2}{4s^2}\int\limits_y^{+\infty}\frac{\sin(t+\gamma)}{t}\,(a_2(t)-a_2(y))\,a(t)\,e^{-2(c-c(t))}\,dt\,.
\end{equation}

Let's make some remarks about convergence of integrals. First, there exists the limit of \(a_2(y)\) at \(y\to+\infty\). Actually, one has
$$
a_2(+\infty)=\int\limits_1^{+\infty}\frac{\sin(t+\gamma)}{t}\,(e^{2(c-c(t))}-1)\,dt+\int\limits_1^{+\infty}\frac{\sin(t+\gamma)}{t}\,dt\,.
$$
Second integral is conditionally convergent by Dirichlet's test. The first integral is absolutely convergent because
$$
(e^{2(c-c(t))}-1)\approx \frac{1}{t}\,,\quad t\to+\infty\,.
$$
Besides, one has
$$
a_2(+\infty)-a(y)=O\left(\frac{1}{y}\right)\,,\quad y\to+\infty\,.
$$

Putting in (\ref{integral_equation_for_a}) first \(C_1=1\), \(C_2=0\) and then \(C_1=0\), \(C_2=1\) we obtain two integral equations
\begin{equation}
\label{integral_equation_for_A1}
a(y)=1+\frac{\alpha^2}{4s^2}\int\limits_y^{+\infty}\frac{\sin(t+\gamma)}{t}\,(a_2(t)-a_2(y))\,a(t)\,e^{-2(c-c(t))}\,dt\,,
\end{equation}
and
\begin{equation}
\label{integral_equation_for_A2}
a(y)=a_2(y)+\frac{\alpha^2}{4s^2}\int\limits_y^{+\infty}\frac{\sin(t+\gamma)}{t}\,(a_2(t)-a_2(y))\,a(t)\,e^{-2(c-c(t))}\,dt\,.
\end{equation}

Denote by \(A_1(y)\) the solution of~(\ref{integral_equation_for_A1}) and by \(A_2(y)\) the solution of~(\ref{integral_equation_for_A2}). \(A_1(y)\) and \(A_2(y)\) are also the solutions of differential equation~(\ref{differential_equation_a}). They are linearly independent. Actually, one can show that the solution \(A_1(y)\) is bounded at \(y\to+0\) but the solution \(A_2(y)\) is not. One has
$$
a_2(y)=\int\limits_1^{y}\frac{\sin(t+\gamma)}{t}\,e^{2(c-c(t))}\,dt\,,\quad
c-c(t)=\frac{\alpha}{2s}\int\limits_t^{+\infty}\frac{\cos(z+\gamma)}{z}\,dz\,.
$$
The expression \(|c-c(t)|\) decreases as \(\dfrac{1}{t}\) for large \(t\), but as \(t\to+0\) we have
$$
c-c(t)=\frac{\alpha}{2s}\int\limits_t^{+\infty}\frac{\cos(z+\gamma)}{z}\,dz\sim-\frac{\alpha\cos\gamma}{2s}\,\ln t\,,\quad e^{2(c-c(t))}\approx t^{\textstyle-\frac{\alpha\cos\gamma}{s}}\,.
$$
Then
$$
|a_2(y)|\approx y^{\textstyle-\frac{\alpha\cos\gamma}{s}}\,,\quad y\to+0\,.
$$

So the solution \(A_1(y)\) of~(\ref{integral_equation_for_A1}) satisfies the first decomposition in~(\ref{form_of_solutions_at_0}) at \(y\to+0\) and is bounded. The unboundedness of \(A_2(y)\) at \(y\to+0\) follows from integral equation
$$
A_2(y)=a_2(y)+\frac{\alpha^2}{4s^2}\int\limits_y^{+\infty}\frac{\sin(t+\gamma)}{t}\,(a_2(t)-a_2(y))\,A_2(t)\,e^{-2(c-c(t))}\,dt\,.
$$
Actually, if we suppose that \(\sup\limits_{y>0}|A_2(y)|<+\infty\) then the integral on the right-hand side of this equation is bounded as \(y\to+0\). Hence \(a_2(y)\) should be bounded at \(y\to+0\). But it is not true. 

Thus we have proved

\begin{theorem}
\label{properties_of_solution_of_diff_equation}
The differential equation~(\ref{differential_equation_a}) has two linearly independent solutions \(A_1(y)\) and \(A_2(y)\) such that
$$
\lim\limits_{y\to+\infty}A_1(y)=1\,,\quad \lim\limits_{y\to+\infty}A_2(y)=\int\limits_1^{+\infty}\frac{\sin(t+\gamma)}{t}\,e^{2(c-c(t))}\,dt\,,
$$
$$
A_1(y)\sim c_0\,y^{p_1}\,,\quad A_2(y)\sim c'_0\,y^{p_2}\,,\quad y\to+0\,,\quad(c_0,c'_0\ne0)\,,
$$
where
$$
p_1=\frac{\alpha(-\cos\gamma+1)}{2s}>0\,,\quad p_2=\frac{\alpha(-\cos\gamma-1)}{2s}<0\,.
$$
The constants \(c_0,c'_0\) have the following behaviour at \(\gamma\to0\,:\)
$$
\lim\limits_{\gamma\to0}c_0=\tilde{c}_0\ne0\,,\quad c'_0\approx\sin\gamma\,.
$$
\end{theorem}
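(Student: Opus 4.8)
The plan is to split the statement into three tasks: (i) constructing the two solutions and reading off their limits as $y\to+\infty$; (ii) determining their behaviour as $y\to+0$ through the local Frobenius theory at the regular singular point $y=0$; and (iii) extracting the dependence of the connection constants $c_0,c_0'$ on $\gamma$ as $\gamma\to0$. Tasks (i) and (ii) are essentially assembled from the material already developed above, namely the integral equations~(\ref{integral_equation_for_A1}),~(\ref{integral_equation_for_A2}) and the indicial analysis of~(\ref{differential_equation_a}); the genuine work lies in (iii).

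For (i) I would treat~(\ref{integral_equation_for_A1}) and~(\ref{integral_equation_for_A2}) as Volterra-type integral equations on a half-line $[y_0,+\infty)$ and solve them by successive approximations. The kernel $\frac{\sin(t+\gamma)}{t}\,(a_2(t)-a_2(y))\,e^{-2(c-c(t))}$ is controlled using the estimate $a_2(+\infty)-a_2(y)=O(1/y)$ established above together with the boundedness of $a(t)$ from Corollary~(\ref{finite_limit}); this makes the integral operator a contraction for $y_0$ large, so $A_1,A_2$ exist and are unique with the prescribed normalisation. Letting $y\to+\infty$, the integral terms vanish because $a_2(t)-a_2(y)\to0$, which yields at once $\lim_{y\to+\infty}A_1(y)=1$ and $\lim_{y\to+\infty}A_2(y)=a_2(+\infty)=\int_1^{+\infty}\frac{\sin(t+\gamma)}{t}\,e^{2(c-c(t))}\,dt$. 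Standard ODE continuation extends both solutions to all of $(0,+\infty)$, and their different growth at the origin (below) gives linear independence.

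For (ii), since $0<|\gamma|<1$ the point $y=0$ is a regular singular point of~(\ref{differential_equation_a}): the coefficient of $a'$ has a simple pole and that of $a$ a double pole with leading coefficient $\frac{\alpha^2\sin^2\gamma}{4s^2}$. The indicial equation is exactly the governing equation already written down, with roots $p_1>0>p_2$, so Frobenius theory produces the two local forms~(\ref{form_of_solutions_at_0}), where a $\ln y$ term can appear only in the exceptional case $p_1-p_2=\frac{\alpha}{1-\alpha}\in\mathbb{N}$ and does not affect the leading power. It remains to match the globally normalised $A_1,A_2$ to these local modes. The solution $A_1$ of~(\ref{integral_equation_for_A1}) is bounded near $0$, hence is the $y^{p_1}$ branch, giving $A_1(y)\sim c_0\,y^{p_1}$ with $c_0\neq0$. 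For $A_2$ I would argue by contradiction as in the text: boundedness of $A_2$ near $0$ would force $a_2(y)$ to be bounded there, contradicting $a_2(y)\approx y^{\,p_1+p_2}$; hence $A_2$ is the unbounded branch, $A_2(y)\sim c_0'\,y^{p_2}$ with $c_0'\neq0$.

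The main obstacle is task (iii). For $c_0$ I would combine continuity of the Frobenius coefficient in $\gamma$ with a positivity estimate: iterating~(\ref{integral_equation_for_A1}) keeps $A_1$ uniformly bounded away from zero for small $\gamma$, so its value $\tilde c_0=A_1(+0)$ at $\gamma=0$ (where $p_1\to0$ and $y^{p_1}\to1$) is nonzero. The subtle claim is $c_0'\approx\sin\gamma$, a genuine connection statement relating the $O(1)$ data at $y=+\infty$ to the weight of the singular mode $y^{p_2}$ at the origin. The mechanism I would make precise is visible in~(\ref{integral_equation_for_a}): as $t\to+0$ one has $a_2(t)\approx t^{\,p_1+p_2}$ and $e^{-2(c-c(t))}\approx t^{-(p_1+p_2)}$, so the power-law factors combine to $a_2(t)\,e^{-2(c-c(t))}=O(1)$ and the integrand is governed near the lower endpoint by the factor $\sin(t+\gamma)\to\sin\gamma$, making the near-origin part of the kernel behave like $\frac{\sin\gamma}{t}$. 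Resumming this contribution is what generates the $y^{p_2}$ branch, with amplitude $c_0'\propto\sin\gamma$. The hard part is to turn this residue heuristic into a rigorous computation of $c_0'$ that is uniform as $\gamma\to0$ — controlling the near-origin behaviour of the full iterated solution, where the local exponents $p_2$ and $p_1+p_2$ collide as $\gamma\to0$ — and I expect this uniform connection analysis to be the crux of the proof.
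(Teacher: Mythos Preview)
Your treatment of (i) and (ii) matches the paper: those parts are indeed assembled from the integral equations~(\ref{integral_equation_for_A1}),~(\ref{integral_equation_for_A2}) and the Frobenius analysis at $y=0$, and the paper's formal proof of the theorem takes them as already done, devoting itself entirely to (iii).

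For (iii) you are working much harder than necessary, and the route you sketch --- a uniform-in-$\gamma$ connection analysis of the iterated integral equation near $y=0$ --- is precisely the kind of argument that becomes delicate when the exponents collide. The paper avoids this entirely by using the \emph{Wronskian}. Abel's formula for~(\ref{differential_equation_a}) gives the explicit expression
\[
W(y)=A_1(y)A_2'(y)-A_1'(y)A_2(y)=W_0\,\frac{\sin(y+\gamma)}{y}\,e^{2(c-c(y))}.
\]
Now compare the leading behaviour of both sides as $y\to+0$. On the left, the Frobenius asymptotics $A_1\sim c_0y^{p_1}$, $A_2\sim c_0'y^{p_2}$ yield $W(y)\sim c_0c_0'(p_2-p_1)\,y^{p_1+p_2-1}$. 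On the right, $\sin(y+\gamma)\to\sin\gamma$ and $e^{2(c-c(y))}\sim y^{-\alpha\cos\gamma/s}=y^{p_1+p_2}$. The powers match, and equating coefficients gives
\[
c_0\,c_0'\Bigl(-\tfrac{\alpha}{s}\Bigr)=W_0\,\sin\gamma .
\]
Since $p_2-p_1=-\alpha/s$ is independent of $\gamma$, and $c_0\to\tilde c_0\neq0$ (obtained in the paper by passing to the limiting equation at $\gamma=0$, whose indicial roots are $\tilde p_1=0$, $\tilde p_2=(1-2\alpha)/(1-\alpha)$, and invoking continuous dependence of the bounded Frobenius branch), one reads off $c_0'\approx\sin\gamma$ immediately.

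Your heuristic correctly locates the source of the $\sin\gamma$ factor --- it really is $\sin(t+\gamma)|_{t=0}$ --- but the Wronskian identity converts a second-order connection problem into a first-order one that is solved exactly, so no resummation or uniform control of the iterates near the colliding exponents is needed. That is the idea you are missing.
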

\begin{proof}
It remains to prove the last limit properties of constants \(c_0,c'_0\). Consider the equation~(\ref{differential_equation_a})
$$
a''=\left(\frac{\cos(y+\gamma)}{\sin(y+\gamma)}-\frac{1}{y}-\frac{\alpha\cos(y+\gamma)}{sy}\right)a'+\frac{\alpha^2\sin^2(y+\gamma)}{4s^2y^2}\,a\,.
$$
If \(\gamma\to0\) it takes the form
\begin{equation}
\label{differential_equation_a_gamma=0}
a''=\left(\frac{\cos y}{\sin y}-\frac{1}{y}-\frac{\alpha\cos y}{sy}\right)a'+\frac{\alpha^2\sin^2 y}{4s^2y^2}\,a\,.
\end{equation}
Governing equation at the regular point \(y=0\) is
$$
p(p-1)+\frac{\alpha}{s}\,p=0\,,
$$
whence
$$
\tilde{p}_1=0\,,\quad \tilde{p}_2=1-\frac{\alpha}{s}=\frac{1-2\alpha}{1-\alpha}<0\,.
$$
We have thus
\begin{equation}
\label{limits_of_p1_and_p2}
\lim\limits_{\gamma\to0}p_1=\tilde{p}_1\,,\qquad 1+\lim\limits_{\gamma\to0}p_1=\tilde{p}_2\,.
\end{equation}
Two linearly independent solutions of~(\ref{differential_equation_a_gamma=0}) at \(y\to 0\) have
the form
$$
\tilde{A}_1(y)=\tilde{c}_0+\tilde{c}_1y+\ldots\,,\quad \tilde{A}_2(y)=y^{\tilde{p}_2}\,(\tilde{c}'_0+\tilde{c}'_1y+\ldots)+\tilde{c}_{-1}\,\tilde{A}_1(y)\,\ln y\,,\quad \tilde{c}_0,\tilde{c}'_0\ne0\,.
$$
Taking into account~(\ref{limits_of_p1_and_p2}) one can conclude that to obtain this formulas for \(\tilde{A}_1(y)\) and \(\tilde{A}_2(y)\) from~(\ref{form_of_solutions_at_0}) we should have
$$
\lim\limits_{\gamma\to0}c_0=\tilde{c}_0\ne0\,,\quad \lim\limits_{\gamma\to0}c'_0=0\,.
$$
From~(\ref{differential_equation_a}) we have for Wronskian W(y) the expression
$$
W(y)=A_1(y)A'_2(y)-A'_1(y)A_2(y)=W_0\frac{\sin(y+\gamma)}{y}\,e^{2(c-c(y))}\,.
$$
At \(y\to0\) it follows that
$$
c_0\,c'_0(p_2-p_1)\,y^{p_1+p_2-1}=W_0\,\sin\gamma\,y^{{\textstyle-\frac{\alpha\cos\gamma}{s}-}1}\,,
$$ 
or
$$
c_0\,c'_0(-\frac{\alpha}{s})\,y^{{\textstyle-\frac{\alpha\cos\gamma}{s}-}1}=W_0\,\sin\gamma\,y^{{\textstyle-\frac{\alpha\cos\gamma}{s}-}1}\,.
$$ 
As \(c_0\to\tilde{c}_0\ne0\) it follows that \(c'_0\approx\sin\gamma\), \(\gamma\to0\).
\end{proof}

Now we are able to state the dependence of limit value \(a(+\infty)\) from initial conditions. The general solution of~(\ref{differential_equation_a}) has the form
$$
a(y)=C_1\,A_1(y)+C_2A_2(y)\,.
$$
For determining of the constants \(C_1,C_2\) we have the system
$$
\left\{\begin{array}{l}
C_1\,A_1(\beta)+C_2\,A_2(\beta)=a(\beta)\\
C_1\,A'_1(\beta)+C_2\,A'_2(\beta)=a'(\beta)
\end{array}\right.\,,
$$
whence
$$
C_1=\frac{a(\beta)\,A'_2(\beta)-a'(\beta)\,A_2(\beta)}{A_1(\beta)\,A'_2(\beta)-A_2(\beta)\,A'_1(\beta)}\,,\quad
C_2=\frac{a'(\beta)\,A_1(\beta)-a(\beta)\,A'_1(\beta)}{A_1(\beta)\,A'_2(\beta)-A_2(\beta)\,A'_1(\beta)}\,.
$$
From Theorem~(\ref{properties_of_solution_of_diff_equation}) it follows that
$$
a(+\infty)=C_1+C_2\int\limits_1^{+\infty}\frac{\sin(t+\gamma)}{t}\,e^{2(c-c(t))}\,dt\,.
$$

Let's consider now the limit of \(a(+\infty)\) when \(x\to0\). As the density is even function we can consider the limit \(x\to-0\). Then \(\beta\to+0\), \(|\gamma|\to+0\) ( \(\beta=-\dfrac{2x}{1-\alpha}\), \(\gamma=2xc\) ). If
$$
V_1=\left(\begin{array}{cc}
V_{11} & V_{12}\\
V_{21} & V_{22}
\end{array}
\right)
$$
then from~(\ref{form_of_V(y)}), (\ref{systems_of_equations}) it follows that
$$
V_{11}=a(\beta)\,,\quad V_{21}=n(\beta)\,e^{2c}\,,\quad a'(\beta)=s'(\beta)\,V_{21}=\frac{\alpha\sin(\beta+\gamma)}{2s\beta}\,V_{21}\,.
$$
Using the results of the Theorem~(\ref{properties_of_solution_of_diff_equation}) and taking into account~(\ref{V1_xto0}) we have
$$
\lim\limits_{x\to-0}C_1=\lim\limits_{x\to-0}\frac{V_{11}\,c'_0\,p_2\,\beta^{\,p_2-1}-c'_0\,\beta^{\,p_2}\,s'(\beta)\,V_{21}}{c_0\,c'_0\,(p_2-p_1)\,\beta^{\,p_1+p_2-1}}=\lim\limits_{x\to-0}\frac{V_{11}\,p_2}{c_0\,(p_2-p_1)\,\beta^{\,p_1}}=\frac{v_{11}}{\tilde{c}_0}\,,
$$
because \(\dfrac{p_2}{p_2-p_1}=\dfrac{1+\cos\gamma}{2}\to 1\), \(\beta^{\,p_1}\to 1\)\,.
One has also as \(x\to-0\)
\begin{equation}
\label{limit_C2}
C_2\sim
\frac{s'(\beta)\,V_{21}\,c_0\,\beta^{\,p_1}-V_{11}\,c_0\,p_1\,\beta^{\,p_1-1}}{c_0\,c'_0\,(p_2-p_1)\,\beta^{\,p_1+p_2-1}}
=\frac{s'(\beta)\,V_{21}\,\beta-V_{11}\,p_1}{c'_0\,(-\frac{\alpha}{s})\,\beta^{\,p_2}}=O\left(|x|^{1+\alpha/s}\right)\to0\,,
\end{equation}
$$
\int\limits_1^{+\infty}\frac{\sin(t+\gamma)}{t}\,e^{2(c-c(t))}\,dt\to\int\limits_1^{+\infty}\frac{\sin t}{t}\,\exp\left\{\frac{\alpha}{s}\int\limits_t^{+\infty}\frac{cos z\,dz}{z}\right\}\,dt=I=const.
$$
Thus we obtain
\begin{equation}
\label{limit_a(infty)}
\lim\limits_{x\to-0}a(+\infty)=\frac{v_{11}}{\tilde{c}_0}\ne0\,.
\end{equation}

The limit of the function \(n(y)\) at \(y\to+\infty\) we will find from the first equation of the system~(\ref{systems_of_equations})
$$
n(+\infty)=\lim\limits_{y\to+\infty}\frac{a'(y)}{s'(y)}\,e^{-2(c-c(y))}=\lim\limits_{y\to+\infty}\frac{a'(y)}{s'(y)}=\lim\limits_{y\to+\infty}a'(y)\,\frac{2sy}{\alpha\sin(y+\gamma)}\,.
$$
One has
$$
a'(y)=C_1\,A'_1(y)+C_2\,A'_2(y)\,.
$$
From integral equations
$$
A_1(y)=1+\frac{\alpha^2}{4s^2}\int\limits_y^{+\infty}\frac{\sin(t+\gamma)}{t}\,(a_2(t)-a_2(y))\,A_1(t)\,e^{-2(c-c(t))}\,dt\,,
$$
$$
A_2(y)=a_2(y)+\frac{\alpha^2}{4s^2}\int\limits_y^{+\infty}\frac{\sin(t+\gamma)}{t}\,(a_2(t)-a_2(y))\,A_2(t)\,e^{-2(c-c(t))}\,dt
$$
it follows that
$$
A'_1(y)=\frac{\alpha^2}{4s^2}\int\limits_y^{+\infty}\frac{\sin(t+\gamma)}{t}\,(a_2(t)-a'_2(y))\,A_1(t)\,e^{-2(c-c(t))}\,dt=
$$
$$
=\frac{\alpha^2}{4s^2}\int\limits_y^{+\infty}\frac{\sin(t+\gamma)}{t}\,a_2(t)\,A_1(t)\,e^{-2(c-c(t))}\,dt-
$$
$$
-\frac{\sin(y+\gamma)}{y}\,e^{2(c-c(y))}\frac{\alpha^2}{4s^2}\int\limits_y^{+\infty}\frac{\sin(t+\gamma)}{t}\,A_1(t)\,e^{-2(c-c(t))}\,dt=
$$
$$
=\frac{\alpha^2\sin(y+\gamma)}{4s^2y}\,a_2(y)\,A_1(y)\,e^{-2(c-c(y))}+O\left(\frac{\sin(y+\gamma)}{y^2}\right)\,,\quad y\to+\infty\,,
$$
$$
A'_2(y)=\frac{\sin(y+\gamma)}{y}\,e^{2(c-c(y))}+\frac{\alpha^2}{4s^2}\int\limits_y^{+\infty}\frac{\sin(t+\gamma)}{t}\,(a_2(t)-a'_2(y))\,A_2(t)\,e^{-2(c-c(t))}\,dt=
$$
$$
=\frac{\sin(y+\gamma)}{y}\,e^{2(c-c(y))}+\frac{\alpha^2}{4s^2}\int\limits_y^{+\infty}\frac{\sin(t+\gamma)}{t}\,a_2(t)\,A_2(t)\,e^{-2(c-c(t))}\,dt-
$$
$$
-\frac{\sin(y+\gamma)}{y}\,e^{2(c-c(y))}\frac{\alpha^2}{4s^2}\int\limits_y^{+\infty}\frac{\sin(t+\gamma)}{t}\,A_2(t)\,e^{-2(c-c(t))}\,dt=
$$
$$
=\frac{\sin(y+\gamma)}{y}\,e^{-2(c-c(y))}\left(1+\frac{\alpha^2\,a_2(y)\,A_2(y))}{4s^2}\right)+O\left(\frac{\sin(y+\gamma)}{y^2}\right)\,,\quad y\to+\infty\,.
$$
Here we used integrating by parts and boundedness of function of the form
$$
\int\limits_1^y\sin t\,f(t)\,dt\,,\quad \lim\limits_{y\to+\infty}f(t)=f_0<\infty\,,\quad
f'(t)\approx\frac{\sin t}{t}\,.
$$
Thus we obtain
$$
n(+\infty)=C_1\lim\limits_{y\to+\infty}A_1'(y)\,\frac{2sy}{\alpha\sin(y+\gamma)}+
C_2\lim\limits_{y\to+\infty}A_2'(y)\,\frac{2sy}{\alpha\sin(y+\gamma)}=
$$
$$
=C_1\,\frac{\alpha}{2s}\,a_2(+\infty)\,A_1(+\infty)+C_2\,\left(\frac{2s}{\alpha}+\frac{\alpha}{2s}\,a_2(+\infty)\,A_2(+\infty)\right)=
$$
$$
=\frac{\alpha}{2s}\,a_2(+\infty)\,a(+\infty)+C_2\,\frac{2s}{\alpha}\,.
$$
Using~(\ref{limit_C2}) and~(\ref{limit_a(infty)}) we find
\begin{equation}
\label{limit_n(infty)}
\lim\limits_{x\to-0}n(+\infty)=\frac{\alpha\,v_{11}}{2s\,\tilde{c}_0}\,I\ne0\,.
\end{equation}

As the second system in~(\ref{systems_of_equations}) has the same form as first system we can apply all results for \(a(y)\) and \(n(y)\) to the functions \(m(y)\) and \(b(y)\). One has
$$
m(y)=C_1\,A_1(y)+C_2A_2(y)\,,
$$
$$
C_1=\frac{m(\beta)\,A'_2(\beta)-m'(\beta)\,A_2(\beta)}{A_1(\beta)\,A'_2(\beta)-A_2(\beta)\,A'_1(\beta)}\,,\quad
C_2=\frac{m'(\beta)\,A_1(\beta)-m(\beta)\,A'_1(\beta)}{A_1(\beta)\,A'_2(\beta)-A_2(\beta)\,A'_1(\beta)}\,,
$$
$$
m(+\infty)=C_1+C_2\int\limits_1^{+\infty}\frac{\sin(t+\gamma)}{t}\,e^{2(c-c(t))}\,dt\,.
$$
Taking into account that
$$
m(\beta)=V_{12}\,e^{2c}\,,\quad m'(\beta)=s'(\beta)\,V_{22}\,e^{2c}\,,
$$
we have at \(x\to-0\)
$$
e^{-2c}\,C_1\to\frac{V_{12}\,c'_0\,p_2\,\beta^{\,p_2-1}-c'_0\,\beta^{\,p_2}\,s'(\beta)\,V_{22}}{c_0\,c'_0\,(p_2-p_1)\,\beta^{\,p_1+p_2-1}}=\frac{V_{12}\,p_2-\beta\,s'(\beta)\,V_{22}}{c_0\,(p_2-p_1)\,\beta^{\,p_1}}=O(x)\,,
$$
$$
e^{-2c}\,C_2\sim
\frac{s'(\beta)\,V_{22}\,c_0\,\beta^{\,p_1}-V_{12}\,c_0\,p_1\,\beta^{\,p_1-1}}{c_0\,c'_0\,(p_2-p_1)\,\beta^{\,p_1+p_2-1}}
=\frac{s'(\beta)\,V_{22}\,\beta-V_{12}\,p_1}{c'_0\,(-\frac{\alpha}{s})\,\beta^{\,p_2}}=O\left(|x|^{1+\alpha/s}\right)\to0\,,
$$
so that
\begin{equation}
\label{limit_m(infty)}
m(+\infty)\,e^{-2c}=O(x)\,.
\end{equation}
For the function \(b(y)\) we obtain
$$
b(+\infty)=\lim\limits_{y\to+\infty}\frac{m'(y)}{s'(y)}\,e^{-2(c-c(y))}=\lim\limits_{y\to+\infty}\frac{m'(y)}{s'(y)}=\lim\limits_{y\to+\infty}m'(y)\,\frac{2sy}{\alpha\sin(y+\gamma)}=
$$
\begin{equation}
\label{limit_b(infty)}
=\frac{\alpha}{2s}\,a_2(+\infty)\,m(+\infty)+C_2\,\frac{2s}{\alpha}=O\left(|x|^{1-\textstyle\frac{\alpha}{s}}\right)\,.
\end{equation}
From (\ref{limit_a(infty)}), (\ref{limit_n(infty)}), (\ref{limit_m(infty)}), (\ref{limit_b(infty)}) it follows
\begin{theorem}
\label{limit_V}
If \(V=\lim\limits_{n\to\infty}V_n\), then 
$$
V=\left(\begin{array}{cc}
\displaystyle\frac{v_{11}}{\tilde{c}_0} & O(x)\\
\displaystyle\frac{\delta\,\alpha\,v_{11}}{2s\,\tilde{c}_0}\,I\,|x|^{\textstyle-\frac{\alpha}{s}} & O\left(|x|^{1-\textstyle\frac{\alpha}{s}}\right)
\end{array}\right)\,,\quad x\to0\,,\quad \frac{\delta\,v_{11}}{\tilde{c}_0}\,I\ne0\,.
$$
\end{theorem}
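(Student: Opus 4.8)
The plan is to read the four entries of $V=\lim_{n\to\infty}V_n$ straight off the structural form~\eqref{form_of_V(y)} and then substitute the four asymptotic evaluations already obtained. Since $V_n=V(\beta\,n^s)$ and $\beta\,n^s\to+\infty$ for fixed $x\ne0$, we have $V=V(+\infty)=\lim_{y\to+\infty}V(y)$, and by Corollary~\ref{finite_limit} each scalar function $a(y),b(y),m(y),n(y)$ does possess a finite limit. Hence, reading~\eqref{form_of_V(y)} in the limit,
$$
V=\left(\begin{array}{cc} a(+\infty) & m(+\infty)\,e^{-2c}\\ n(+\infty)\,e^{2c} & b(+\infty)\end{array}\right)\,,
$$
so the statement reduces to inserting the known behaviour of these four quantities as $x\to-0$ (recall the density is even, so $x\to-0$ suffices, with $\beta\to+0$ and $|\gamma|\to+0$).

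For the $(1,1)$ entry I would quote~\eqref{limit_a(infty)} verbatim, giving $a(+\infty)\to v_{11}/\tilde c_0$. For the $(1,2)$ entry I would use~\eqref{limit_m(infty)}, which already carries the factor $e^{-2c}$ and yields $m(+\infty)\,e^{-2c}=O(x)$. For the $(2,2)$ entry I would use~\eqref{limit_b(infty)}, giving $b(+\infty)=O(|x|^{1-\alpha/s})$. The only entry needing an extra manipulation is $(2,1)$: here I would combine the limit~\eqref{limit_n(infty)}, namely $n(+\infty)\to\frac{\alpha\,v_{11}}{2s\,\tilde c_0}\,I$, with the growth of the prefactor $e^{2c}\sim\mathrm{const}\cdot|x|^{-\alpha/s}$ coming from $e^{c}\sim\delta\,|x|^{-\alpha/(2s)}$, to obtain $n(+\infty)\,e^{2c}\sim\frac{\delta\,\alpha\,v_{11}}{2s\,\tilde c_0}\,I\,|x|^{-\alpha/s}$ (the square of $\delta$ being absorbed into the single constant $\delta$). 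Arranging these four results produces the displayed matrix.

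Finally I would settle the non-degeneracy claim $\frac{\delta\,v_{11}}{\tilde c_0}\,I\ne0$ by collecting $\delta\ne0$ (from the definition of $e^{c}$), $v_{11}\ne0$ (from~\eqref{V1_xto0}), and $\tilde c_0\ne0$ (from Theorem~\ref{properties_of_solution_of_diff_equation}). The one genuinely new ingredient, and the place where I expect real care to be required, is the non-vanishing of the scalar $I=\int_1^{+\infty}\frac{\sin t}{t}\exp\{\frac{\alpha}{s}\int_t^{+\infty}\frac{\cos z}{z}\,dz\}\,dt$: unlike the other factors it is not forced to be nonzero by any earlier construction, so I would either establish $I\ne0$ directly by sign or monotonicity estimates on this oscillatory integral, or else record it as a standing hypothesis. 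Once these pieces are in hand the theorem follows by substitution, and it is the $(2,1)$ entry, blowing up like $|x|^{-\alpha/s}$, that ultimately drives the singular behaviour of the spectral density at $x=0$.
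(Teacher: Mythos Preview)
Your proposal is correct and matches the paper's own argument essentially line for line: the paper derives the theorem simply by collecting the four limits \eqref{limit_a(infty)}, \eqref{limit_n(infty)}, \eqref{limit_m(infty)}, \eqref{limit_b(infty)} into the matrix form \eqref{form_of_V(y)}, exactly as you do. Your caveat about $I\ne0$ is well taken---the paper asserts this without a separate verification---and your remark on $\delta$ versus $\delta^2$ correctly identifies a harmless abuse of notation in the stated $(2,1)$ entry.
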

\begin{corollary}
The matrix \(G=Z\,V\) have the following behaviour at \(x\to0\)
$$
G=\left(\begin{array}{cc}
\displaystyle\frac{v_{11}}{\tilde{c}_0}\,|x|^{\textstyle-\frac{\alpha}{2s}} & O(|x|^{1-\textstyle\frac{\alpha}{2s}})\\
\displaystyle\frac{\delta\,\alpha\,v_{11}}{2s\,\tilde{c}_0}\,I\,|x|^{\textstyle-\frac{\alpha}{2s}} & O\left(|x|^{1-\textstyle\frac{\alpha}{2s}}\right)
\end{array}\right)\,,\quad s=1-\alpha\,.
$$
\end{corollary}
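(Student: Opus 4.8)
The plan is to read off the corollary directly from Theorem~(\ref{limit_V}) by performing the matrix product $G=Z\,V$. Recall that $Z=\lim_{n\to\infty}Z_n$ is the diagonal matrix $Z=\mathrm{diag}(e^{c},e^{-c})$ and that the asymptotics $e^{c}\sim\delta\,|x|^{-\alpha/(2s)}$ ($\delta\ne0$) was already established just before formula~(\ref{G_xto0}); consequently $e^{-c}\sim\delta^{-1}\,|x|^{\alpha/(2s)}$ as $x\to0$, with $s=1-\alpha$. Since $Z$ is diagonal, the $(i,j)$ entry of $G$ is simply $Z_{ii}\,V_{ij}$, so each entry of $G$ is the product of one of the two prefactors $e^{\pm c}$ with the corresponding entry of $V$ supplied by Theorem~(\ref{limit_V}). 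The whole proof therefore reduces to bookkeeping of the powers of $|x|$ in these four products.

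I would carry out the four multiplications in turn, tracking the exponent of $|x|$ in each. For the first column the top entry is $e^{c}V_{11}$, which contributes the exponent $-\frac{\alpha}{2s}+0=-\frac{\alpha}{2s}$ with nonzero leading coefficient coming from $V_{11}=v_{11}/\tilde c_0$; the bottom entry is $e^{-c}V_{21}$, and since $V_{21}$ carries the factor $|x|^{-\alpha/s}$ the exponents add to $\frac{\alpha}{2s}-\frac{\alpha}{s}=-\frac{\alpha}{2s}$, again with a nonzero coefficient by the inequality $\frac{\delta\,v_{11}}{\tilde c_0}\,I\ne0$ of Theorem~(\ref{limit_V}). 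For the second column the top entry is $e^{c}V_{12}$ with $V_{12}=O(x)$, giving $|x|^{1-\frac{\alpha}{2s}}$, while the bottom entry is $e^{-c}V_{22}$ with $V_{22}=O\!\left(|x|^{1-\alpha/s}\right)$, giving $\frac{\alpha}{2s}+1-\frac{\alpha}{s}=1-\frac{\alpha}{2s}$. Collecting these four results reproduces exactly the matrix displayed in the corollary.

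There is essentially no obstacle at this stage: all the analytic difficulty—establishing the precise leading behaviour and nonvanishing of the entries of $V$—was already absorbed into Theorem~(\ref{limit_V}) and, behind it, into Theorem~(\ref{properties_of_solution_of_diff_equation}) and Lemma~(\ref{asymptotically_equivalence}). The only point that requires a moment of care is that the power $|x|^{-\alpha/s}$ in $V_{21}$ is exactly what is needed to cancel against $e^{-c}\sim\delta^{-1}|x|^{\alpha/(2s)}$ and pull the $(2,1)$ entry of $G$ back up to the same dominant order $|x|^{-\alpha/(2s)}$ as the $(1,1)$ entry; this balance is precisely the mechanism that will later yield the stated power of $|x|$ for $C(x)$ and hence for the spectral density in Theorem~(\ref{main_theorem}). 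One should also confirm that the subdominant $O$-terms genuinely remain of lower order, i.e.\ that $1-\frac{\alpha}{2s}>-\frac{\alpha}{2s}$, which holds trivially, so the off-diagonal contributions do not interfere with the leading column. With these verifications the corollary follows immediately from the product $G=Z\,V$.
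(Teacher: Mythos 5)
Your proof is correct and is essentially the argument the paper intends: the corollary is stated without a separate proof, as an immediate consequence of Theorem~(\ref{limit_V}) together with $Z=\mathrm{diag}(e^{c},e^{-c})$ and the previously established asymptotics $e^{c}\sim\delta\,|x|^{-\alpha/(2s)}$, which is exactly the entrywise multiplication and exponent bookkeeping you carry out. Your computation in fact shows that the constant $\delta$ in the paper's displayed matrix is slightly misplaced --- it should appear in the $(1,1)$ entry ($e^{c}V_{11}\sim\delta\,v_{11}\tilde{c}_0^{-1}|x|^{-\alpha/(2s)}$) and cancel out of the $(2,1)$ entry ($e^{-c}V_{21}\sim\alpha v_{11}I(2s\tilde{c}_0)^{-1}|x|^{-\alpha/(2s)}$) --- but the powers of $|x|$ and the nonvanishing of the first column, which are all that is used for Theorem~(\ref{main_theorem}), are exactly as stated.
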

This result instead of (\ref{G_xto0}) also leads to the main Theorem~(\ref{main_theorem}).

\section{Numerical illustrations}

In this section we illustrate the phenomenon of 2nd order spectral phase transition by numerical calculations. On the Fig.1 it is shown the spectral density \(f(x)\) for \(\alpha=0.6<2/3\). We can see the sharp peak near the point \(x=0\). On the Fig.2 it is shown the spectral density for \(\alpha=2/3\). In this case \(f(0)\) is finite. And the Fig.3 shows the spectral density for \(\alpha=0.8>2/3\). We can see that \(f(x)\) vanishes as \(x\to 0\).

\begin{figure}[h]
\begin{center}
\begin{minipage}[h]{0.29\linewidth}
\includegraphics[width=1.3\linewidth]{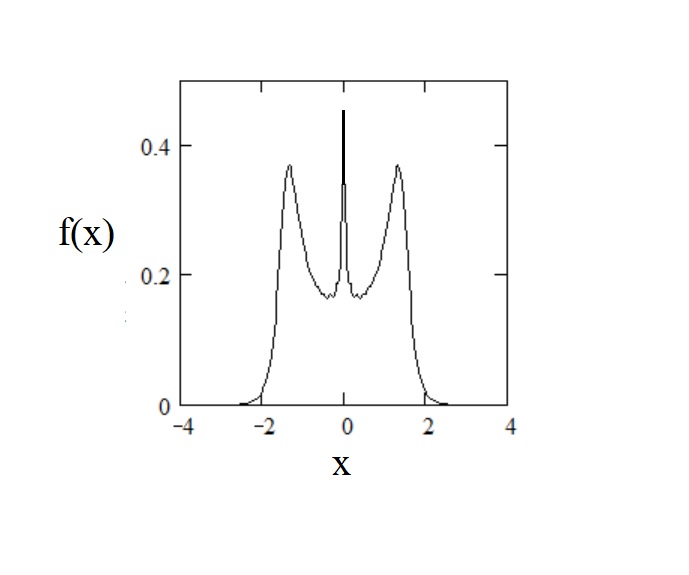}
\caption{$\alpha=0.6$}
\label{setminus}
\end{minipage}
\hfill 
\begin{minipage}[h]{0.29\linewidth}
\includegraphics[width=1.3\linewidth]{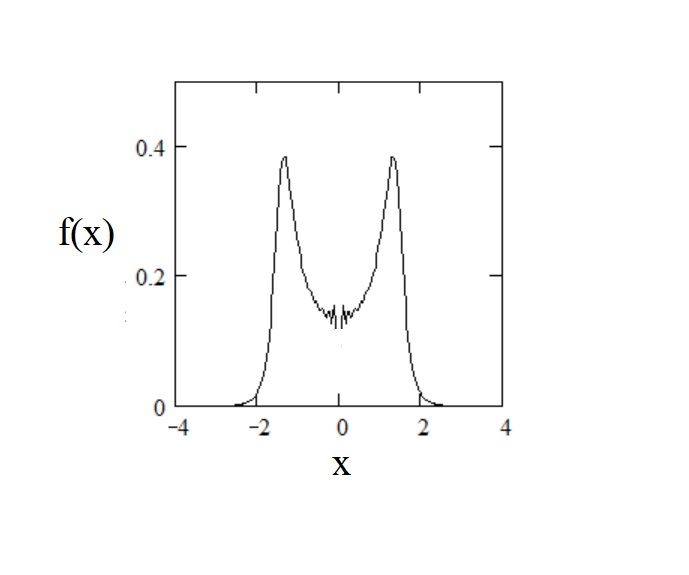}
\caption{$\alpha=2/3$}
\label{symmetrysetminus}
\end{minipage}
\hfill
\begin{minipage}[h]{0.29\linewidth}
\includegraphics[width=1.3\linewidth]{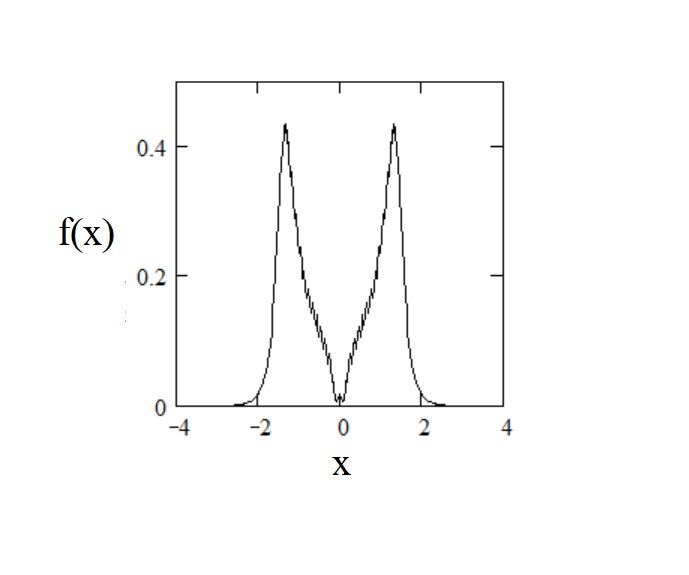}
\caption{$\alpha=0.8$}
\label{dopset}
\end{minipage}
\end{center}
\end{figure}

\section{Conclusion}

We have proved our main result ( Theorem~(\ref{main_theorem}) ) by changing discrete system~(\ref{discrete_system}) by continuous analogue - differential equation~(\ref{equation_G_n}). In the next work we will show how to obtain the same results in a discrete case with system~(\ref{discrete_system}).

\vspace{0.5cm}

{\bf Acknowledgment. }  I am grateful to Sergey Simonov for the discussion of the results and useful remarks.

\end{document}